\newlength{\defbaselineskip}
\newcommand{\setlinespacing}[1]%
           {\setlength{\baselineskip}{#1 \defbaselineskip}}
\numberwithin{equation}{section}
\newtheorem{thm}{Theorem}[section]
\newtheorem{lem}[thm]{Lemma}
\newtheorem{prop}[thm]{Proposition}
\theoremstyle{definition}
\theoremstyle{remark}
\numberwithin{equation}{section}
\begin{document}

\title[Critical inhomogeneous Hartree equations]
{Sharp weighted Strichartz estimates and critical inhomogeneous Hartree equations}

\author{Seongyeon Kim, Yoonjung Lee and Ihyeok Seo}

\thanks{This work was supported by a KIAS Individual Grant (MG082902) at Korea Institute for Advanced Study and the POSCO Science Fellowship of POSCO TJ Park Foundation (S. Kim), NRF-2021R1A4A1032418 (Y. Lee), and NRF-2022R1A2C1011312 (I. Seo).}

\subjclass[2010]{Primary: 35A01, 35Q55; Secondary: 35B45}
\keywords{Well-posedness, Hartree equations, weighted estimates}

\address{School of Mathematics, Korea Institute for Advanced Study, Seoul 02455, Republic of Korea}
\email{synkim@kias.re.kr}

\address{Department of Mathematics, Pusan National University, Busan 46241, Republic of Korea}
\email{yjglee@pusan.ac.kr}

\address{Department of Mathematics, Sungkyunkwan University, Suwon 16419, Republic of Korea}
\email{ihseo@skku.edu}

\begin{abstract}
We study the Cauchy problem for the inhomogeneous Hartree equation in this paper. Although its well-posedness theory has been extensively studied in recent years, much less is known compared to the classical Hartree model of homogeneous type. In particular, the problem of Sobolev initial data with the Sobolev critical index remains unsolved. The main contribution of this paper is to establish the local existence of solutions to the inhomogeneous equation in the critical cases. To do so, we obtain all possible $L^p$ Strichartz estimates with singular weights.
\end{abstract}

\maketitle

\section{Introduction}
In this paper, we are concerned with the Cauchy problem for the inhomogeneous Hartree equation:
\begin{equation}\label{HE}
\begin{cases}
	i \partial_t u + \Delta u = \lambda(I_\alpha \ast |\cdot|^{-b} |u|^p)|x|^{-b} |u|^{p-2}u, \quad (x,t) \in \mathbb{R}^n \times \mathbb{R}, \\
	u(x,0)= u_0(x),
\end{cases}
\end{equation}
where $p\ge 2$, $b>0$, and $\lambda = \pm1$. Here, the case $\lambda=1$ is \textit{defocusing}, while the case $\lambda=-1$ is \textit{focusing}. The Riesz potential $I_\alpha$ is defined on $\mathbb{R}^n$ by
$$I_{\alpha}:=\frac{\Gamma(\frac{n-\alpha}{2})}{\Gamma({\frac \alpha2})\pi^{\frac n2} 2^{\alpha} |\cdot|^{n-\alpha}}, \quad 0<\alpha<n.$$
The problem \eqref{HE} with $b\neq0$ arises in the physics of laser beams and of multiple-particle systems \cite{GM, MPT}.
The homogeneous problem where $b=0$ is called the Hartree equation (or Choquard equation) and has several physical origins such as those discussed in the works of Lewin and Rougerie \cite{LR} and Gross and Meeron \cite{GM} for quantum mechanics, and in the work of Lions \cite{L} for Hartree-Fock theory. If $b=0$ and $p=2$ more particularly, it models the dynamics of boson stars, where the potential is the Newtonian gravitational potential in the appropriate physical units (\cite{ES, L}).

Note that if $u(x,t)$ is a solution of \eqref{HE} so is $u_\delta(x,t)=\delta^{\frac{2-2b+\alpha}{2(p-1)}} u(\delta x, \delta^2 t)$, with the rescaled initial data $u_{\delta,0}(x)=u_\delta(x,0)$ for all $\delta>0$.
Furthermore,
\begin{equation*}
	\|u_{\delta,0}\|_{\dot H^s}=\delta^{s-\frac n2 +\frac{2-2b+\alpha}{2(p-1)}}\|u_0\|_{\dot H^s}
\end{equation*}
from which the critical Sobolev index is given by $s_c = \frac{n}{2}-\frac{2-2b+\alpha}{2(p-1)}$ (alternatively $p=1+\frac{2-2b+\alpha}{n-2s_c}$) which determines the scale-invariant Sobolev space $\dot H^{s_c}$.
In this regard, the case $s_c=0$ (alternatively $p=p_\ast:=1+\frac{\alpha+2-2b}{n}$) is referred to as the mass-critical (or $L^2$-critical). If $s_c=1$ (alternatively $p=p^\ast:=1+\frac{2-2b+\alpha}{n-2}$) the problem is called the energy-critical (or $H^1$-critical), and it is known as the mass-supercritical and energy-subcritical if $0<s_c<1$. Finally, the below $L^2$ case is when $s_c<0$.
The limiting case $b\rightarrow0$  in the above formulation aligns with the critical homogeneous regime.

The well-posedness theory of the Hartree equation ($b=0$ in \eqref{HE}) has been extensively studied over the past few decades and is well understood. (See, for example, \cite{FY, CHO, GW, MXZ, MXZ1, Saa} and references therein.) However, much less is known about the inhomogeneous model \eqref{HE} that has drawn attention in recent several years since the singularity $|x|^{-b}$ in the nonlinearity makes the problem more complex.
The well-posedness for \eqref{HE} was first studied by Alharbi and Saanouni \cite{AS} using an adapted Gargliardo-Nireberg type identity. 
They showed that \eqref{HE} is locally well-posed in $L^2$ if $2\leq p < p_\ast$ and in $H^1$ if $2\leq p<p^\ast$.
In \cite{SA}, Saanouni and Talal treated the intermediate case in the sense that \eqref{HE} is locally well-posed in $\dot H^1 \cap \dot H^{s_c}$, $0<s_c<1$, if $2\leq p < p^\ast$, but this does not imply the inter-critical case $H^{s_c}$.
For related results on the scattering theory, see also \cite{SX,SP,X,SP2}.

Despite these efforts, the critical case $H^{s_c}$ remains unsolved. 
The main contribution of this paper is to solve the case of $s_c\geq0$ and even more subtle critical cases below $L^2$. To this end, we obtain all possible weighted $L^p$ Strichartz estimates with singular weights.

\subsection{Sharp weighted Strichartz estimates}
Now we state the weighted Strichartz estimates in which the weights make it possible to control the singularity $|x|^{-b}$ in the nonlinearity more effectively.  

\begin{thm}\label{p1}
Let $n\ge 2$ and $-1/2<s<n/2$.
Then we have
\begin{equation}\label{1}
\| e^{it\Delta}f\|_{L_t^q L_x^r(|x|^{-r\gamma})} \lesssim \|f\|_{\dot{H}^{s}}
\end{equation}
if $(q, r)$ is $(\gamma, s)$-\textit{Schr\"odinger admissible}, i.e., for $\gamma>0$,
\begin{equation}\label{gsad}
0\leq \frac{1}{q} \leq \frac{1}{2}, \quad \frac{\gamma}{n}< \frac{1}{r} \leq \frac{1}{2},
 \quad \frac{2}{q}<n(\frac{1}{2}-\frac{1}{r})+2\gamma,\quad
 s=n(\frac{1}{2}-\frac{1}{r})-\frac{2}{q}+\gamma.
\end{equation}
\end{thm}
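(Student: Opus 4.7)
The plan is to prove \eqref{1} by complex interpolation between two families of extremes lying on the admissibility line $s = n(1/2-1/r)-2/q+\gamma$. At the endpoint $q=\infty$, the unitarity of $e^{it\Delta}$ on $\dot H^s$ reduces \eqref{1} to the time-independent Hardy--Sobolev (Stein--Weiss) inequality $\||x|^{-\gamma} g\|_{L^r}\lesssim \|g\|_{\dot H^{s}}$ with $s = n(1/2-1/r)+\gamma$, valid when $0<\gamma<n/r$ and $s<n/2$; these reproduce the constraints $\gamma/n<1/r$ and $s<n/2$ in \eqref{gsad}. At the opposite endpoint $(q,r)=(2,2)$ we invoke the Kato--Yajima weighted smoothing estimate
\begin{equation*}
\||x|^{-\gamma}e^{it\Delta}f\|_{L^2_{t,x}}\lesssim \|f\|_{\dot H^{\gamma-1}},\qquad \tfrac12<\gamma<\tfrac n2,
\end{equation*}
obtained from the weighted resolvent bound $\||x|^{-\gamma}(-\Delta-\lambda\pm i0)^{-1}|y|^{-\gamma}\|_{L^2\to L^2}\lesssim |\lambda|^{\gamma-1}$ via the spectral theorem and Plancherel in $t$. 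The condition $\gamma>1/2$ here is precisely what forces $s>-1/2$ in \eqref{gsad}, while $\gamma<n/2$ requires $n\geq 2$.

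Given these endpoints, I would apply Stein's analytic interpolation theorem to the analytic family
\begin{equation*}
T_z f := |x|^{-\gamma(z)} e^{it\Delta}(-\Delta)^{-s(z)/2} f,
\end{equation*}
with $\gamma(z),s(z)$ affine in $z$ joining the Hardy--Sobolev parameters at $\Re z =0$ to the Kato--Yajima parameters at $\Re z =1$; uniform polynomial-in-$\Im z$ growth of $\|T_{iy}\|$ and $\|T_{1+iy}\|$ follows from the $L^p$-boundedness of $|x|^{\pm iy}$. Sweeping over all valid pairs of endpoints (varying the Hardy--Sobolev exponent $r_0$ at $q=\infty$ and the Kato--Yajima weight $\gamma_1$ at $(q,r)=(2,2)$) fills out the full admissibility hyperplane \eqref{gsad}, and the resulting bound $\|T_\theta f\|_{L^q_t L^r_x}\lesssim \|f\|_{L^2}$ translates into \eqref{1} after setting $f=(-\Delta)^{s/2}h$.

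The principal technical obstacle lies in two places. First, the weighted resolvent estimate behind the Kato--Yajima endpoint must be established for the purely homogeneous weight $|x|^{-\gamma}$ across the entire range $1/2<\gamma<n/2$, rather than the more customary $\langle x\rangle^{-\gamma}$; this requires a delicate separation of the local and global behaviour of the resolvent and is what pins down the lower restriction $s>-1/2$. Second, one must verify that the region \eqref{gsad}, with its strict inequality $2/q<n(1/2-1/r)+2\gamma$, coincides with the non-degenerate interior of the convex hull of the endpoint parameters; the corner cases $s=-1/2$, $s=n/2$, and $1/r=\gamma/n$ must be excluded because they reproduce known failure points of Kato--Yajima or Hardy--Sobolev, which is consistent with the claim that \eqref{gsad} is sharp.
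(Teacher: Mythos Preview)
Your plan has a geometric gap. When you interpolate between the Stein--Weiss estimate at $q=\infty$ (for some $r_0\in[2,\infty)$, $\gamma_0\in[0,n/r_0)$) and the Kato--Yajima estimate at $(q,r)=(2,2)$, the interpolation parameter is pinned down as $\theta=2/q$, and the resulting spatial exponent is
\[
\frac1r=(1-\theta)\frac1{r_0}+\frac{\theta}{2}>\frac{\theta}{2}=\frac1q
\]
since $r_0<\infty$. Thus your scheme reaches only the subregion $r<q$. But \eqref{gsad} contains admissible points with $r\ge q$: for small $\gamma>0$ the region is a perturbation of the classical Strichartz set $\{2/q\le n(1/2-1/r)\}$, which for every $n\ge2$ includes pairs with $r>q$ (any point near the $L^2$ line with $1/q>n/(2(n+2))$). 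A concrete missed point for $n=3$ is $(1/q,1/r,\gamma)=(\tfrac12,\tfrac25,1)$, which satisfies all of \eqref{gsad} with $s=\tfrac25$ but forces $1/r_0=(1/r-1/q)/(1-2/q)<0$ in your parametrization. So the claim that sweeping over $(r_0,\gamma_1)$ ``fills out the full admissibility hyperplane'' is false as stated.

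The paper closes this gap by bringing in a third endpoint family: the classical unweighted Strichartz estimates $\|e^{it\Delta}f\|_{L^a_tL^b_x}\lesssim\|f\|_{\dot H^\sigma}$ at $\gamma=0$, for general $b$. Its proof proceeds in two stages: first interpolate Kato--Yajima ($q=2$) with the $r=2$ Hardy inequality ($q=\infty$) to obtain all weighted estimates along the edge $r=2$, then interpolate that edge against the full classical Strichartz face at $\gamma=0$. The second step supplies exactly the extra direction in $(1/q,1/r)$-space that your two-endpoint scheme lacks. To repair your argument you must either add the classical Strichartz estimates as a separate endpoint family, or replace the Stein--Weiss face at $q=\infty$ by a genuinely two-dimensional family allowing $q<\infty$ at $\gamma=0$.
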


The weighted estimates \eqref{1} were first introduced in \cite{KLS} (see Proposition 1.5 there) when $(1/q, 1/r,\gamma)$ lies in the open tetrahedron $BGEC$ in Figure \ref{fig1}.
This region is significantly extended in the above theorem 
to the closed hexahedron $BAHECDI$ excluding the closed quadrangles $BADC$ and $AHID$ and the closed triangle $BEC$. 
We also would like to mention that the improved estimates \eqref{1} result in extending the range $0\le s<1/3$ in Theorems 1.1 and 1.3 of \cite{KLS} for critical inhomogeneous Schr\"odinger equations of power-type, up to $0\le s<1/2$ as in Theorem \ref{Hloc}. This is not the main issue in the present work and we shall omit the details. See also \cite{AK, DMS, LS} concerning the power-type.

\begin{figure}
	\centering
	\includegraphics[width=0.55\textwidth]{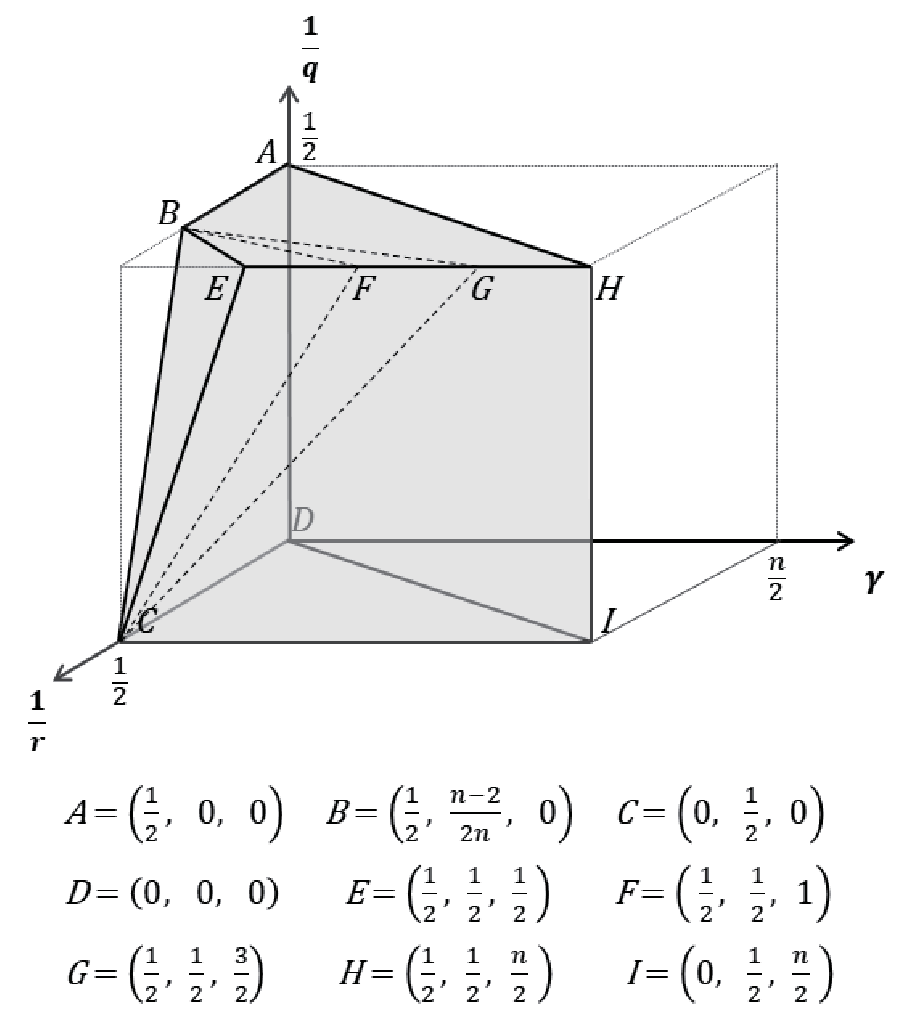}
\caption{The range of $(1/q, 1/r, \gamma)$ in Theorem \ref{p1}.\qquad\qquad}\label{fig1}
\end{figure}

We shall give more details about the region;
the cases $q=2$ and $q=\infty$ in the first condition of \eqref{gsad} correspond to the top and bottom of the hexahedron, respectively.
The sides of the hexahedron, the quadrangles $AHID$ and $EHIC$, are determined in turn by the lower and upper bounds of the second condition in \eqref{gsad}.
The third condition in \eqref{gsad} determines the other side of the hexahedron.
The index $s$ is then uniquely determined by the last condition in \eqref{gsad}. Indeed, \eqref{1} holds for $s=0$
if $(1/q, 1/r, \gamma)$ lies in the triangle $BFC$.
The corresponding regions of $(1/q, 1/r, \gamma)$ when $s\rightarrow-1/2$ go towards the point $E$ from this triangle,
while this movement is carried out in the opposite direction when $s>0$,
up to the point $D$ corresponding to $s=n/2$.

Now we discuss the sharpness of the condition \eqref{gsad}.
The last condition in \eqref{gsad} is just the scaling condition
so that \eqref{1} is invariant under the scaling $(x,t)\rightarrow(\delta x,\delta^2t)$.
For the first one, consider the operator $Tf=e^{it\Delta}f$ and note that \eqref{1} is equivalent to the bounded operator $TT^{\ast}$ from $L_t^{q'}L_x^{r'}(|x|^{r'\gamma})$ to $L_t^q L_x^r(|x|^{-r\gamma})$ by the standard $TT^{\ast}$ argument.
The operator $TT^{\ast}$ is also time-translation invariant since it has a convolution structure with respect to $t$. Hence it follows that $q\ge 2$ (\cite{H}).
Finally, we handle the sharpness of the condition $\gamma/n < 1/r$ and
the third condition of \eqref{gsad} in the following proposition.

\begin{prop}\label{exc}
Let $\gamma>0$ and $s\in\mathbb{R}$.
The estimate \eqref{1} is false if either $\gamma/n \ge 1/r $ or $2/q>n(1/2-1/r)+2\gamma$.
\end{prop}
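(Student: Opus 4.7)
The plan is to construct explicit counterexamples for each of the two conditions, exploiting in one case the singularity of the weight at the origin and in the other case a carefully restricted Knapp-type wave packet.

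For $\gamma/n \ge 1/r$, equivalently $r\gamma \ge n$, the weight $|x|^{-r\gamma}$ fails to be locally integrable at the origin. I would take $f$ to be a Schwartz function with $\widehat f$ a nonnegative bump supported away from $0$, so that $\|f\|_{\dot H^s}<\infty$ for every $s\in\mathbb{R}$ while $f(0)=\int\widehat f>0$. By continuity of $e^{it\Delta}f$ there exist $c,\epsilon>0$ with $|e^{it\Delta}f(x)|\ge c$ on $\{|x|\le\epsilon\}\times[-\epsilon,\epsilon]$, and then for each $|t|\le\epsilon$,
\[
\int_{|x|\le\epsilon}|e^{it\Delta}f(x)|^r|x|^{-r\gamma}\,dx \ge c^r\int_{|x|\le\epsilon}|x|^{-r\gamma}\,dx=+\infty,
\]
forcing the left-hand side of \eqref{1} to be infinite while the right-hand side is finite. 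This half is essentially forced by the singularity of the weight.

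For $2/q>n(1/2-1/r)+2\gamma$, the plan is a ball-type Knapp example. Set $\widehat{f_\delta}=\chi_{B_\delta(e_1)}$ for small $\delta>0$. Because $|\xi|\sim 1$ on the support one has $\|f_\delta\|_{\dot H^s}\sim\delta^{n/2}$, and standard coherence of the phase $x\cdot\xi-t|\xi|^2$ over $B_\delta(e_1)$ gives $|e^{it\Delta}f_\delta|\gtrsim \delta^n$ on the Knapp tube $T=\{(x,t): |x-2te_1|\le\delta^{-1},\ |t|\le\delta^{-2}\}$. The crucial step is to restrict to the time window $|t|\in[\delta^{-1},\delta^{-2}]$, on which the slice $T_t$ has drifted well away from the origin: it is a ball of radius $\delta^{-1}$ centered at $2te_1$, so every point satisfies $|x|\sim|t|$ and the weight reduces to the near-constant $|t|^{-r\gamma}$. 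This produces
\[
\|e^{it\Delta}f_\delta\|_{L^r(|x|^{-r\gamma})}(t)\gtrsim \delta^{n(1-1/r)}|t|^{-\gamma}.
\]
Integrating the $q$-th power in $t$ over $[\delta^{-1},\delta^{-2}]$ -- the integral is dominated by the upper endpoint precisely when $q\gamma<1$, which is automatic in the claimed failure regime since $q<2/(n(1/2-1/r)+2\gamma)\le 1/\gamma$ for $r\ge 2$ -- yields
\[
\|e^{it\Delta}f_\delta\|_{L^q_t L^r_x(|x|^{-r\gamma})}\gtrsim \delta^{n(1-1/r)+2\gamma-2/q}.
\]
Dividing by $\|f_\delta\|_{\dot H^s}\sim\delta^{n/2}$ gives a ratio of order $\delta^{n(1/2-1/r)+2\gamma-2/q}$, which blows up as $\delta\to 0^+$ exactly when $2/q>n(1/2-1/r)+2\gamma$.

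The main subtlety to get right is the choice of time window. A Knapp computation on the full tube $|t|\lesssim\delta^{-2}$, for which $T_t$ straddles the origin, only produces the weaker necessary condition $2/q\le(n+1)(1/2-1/r)+2\gamma$, because $|x|^{-r\gamma}$ integrated against a tube that crosses the origin picks up a significantly enhanced contribution from the near-origin portion of the slice. Shifting to the far window $[\delta^{-1},\delta^{-2}]$, where the weight is frozen at $|t|^{-r\gamma}$ across the slice, removes this extraneous gain and sharpens the exponent to precisely the $n(1/2-1/r)+2\gamma$ appearing in the proposition.
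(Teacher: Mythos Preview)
Your argument for the first half ($r\gamma\ge n$) is essentially the paper's.

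For the second half, both you and the paper use a Knapp-type wave packet, and the two test functions are in fact related by rescaling: the paper places a unit cube in frequency at $Ke_1$ and sends $K\to\infty$, whereas you place a $\delta$-ball at $e_1$ and send $\delta\to 0$. The computations are organized differently, though. The paper extracts a factor $K^{-s}$ from $|\nabla|^{-s}$ acting on data localized at frequency $\sim K$, which only produces a gain when $s<0$; the case $s\ge 0$ is then disposed of separately by invoking necessity of the scaling relation $s=n(\tfrac12-\tfrac1r)-\tfrac2q+\gamma$. In your version $|\xi|\sim 1$ on the support, so $\|f_\delta\|_{\dot H^s}\sim\delta^{n/2}$ uniformly in $s$, and the failure exponent $n(\tfrac12-\tfrac1r)+2\gamma-\tfrac2q$ emerges directly with no case split and no separate appeal to scaling --- a more self-contained route. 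On the other hand, the paper's computation (carried out over the bounded time window $|t|\lesssim 1$) never needs the auxiliary hypothesis $q\gamma<1$ that your window $[\delta^{-1},\delta^{-2}]$ forces; your justification of $q\gamma<1$ via $r\ge 2$ leaves the range $r<2$ uncovered. This is easily repaired: replace the window by $[\tfrac12\delta^{-2},\delta^{-2}]$, on which $|x|\sim\delta^{-2}$ uniformly across each slice, so the weight contributes a flat $\delta^{2\gamma}$, the $t$-integral is $\sim\delta^{-2/q}$ with no constraint on $q\gamma$, and the same ratio $\delta^{n(1/2-1/r)+2\gamma-2/q}$ drops out.
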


\subsection{Applications}
We return our attention to the Cauchy problem \eqref{HE} and apply the weighted estimates to obtain the following well-posedness in the critical case $p=1+\frac{2-2b+\alpha}{n-2s}$ when $s\geq0$.

\begin{thm}\label{Hloc}
Let $n\ge2$ and $0\leq s <1/2$.
Assume that
\begin{equation}\label{as-1}
n-2<\alpha<n\quad \textrm{and} \quad \max\Big\{0, \frac{\alpha-n}{2}+\frac{(n+2)s}{n}\Big\}<b\leq \frac{\alpha-n}{2}+s+1.
\end{equation}
Then for $u_0 \in H^s(\mathbb{R}^n)$ there exist $T>0$ and a unique solution $u \in C([0,T); H^s) \cap L^q ([0,T);L^r (|x|^{-r\gamma}))$ to the problem \eqref{HE} with $p=1+\frac{2-2b+\alpha}{n-2s}$ if
\begin{equation}\label{as-2}
s<\gamma < \min\Big\{1-s,\frac{(p-1)s+1}{p}-\frac{(p-2)(2p-1)n}{4p^2}, \frac{n}{p}\Big\}
\end{equation}
and $(q,r)$ is any $(\gamma,s)$-Schr\"odinger admissible pair. 
Furthermore, the continuous dependence on the initial data holds.
\end{thm}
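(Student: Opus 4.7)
The plan is to run a standard Banach fixed point argument on the Duhamel formula
\begin{equation*}
\Phi(u)(t) = e^{it\Delta}u_0 - i\lambda \int_0^t e^{i(t-\tau)\Delta}\mathcal{N}(u)(\tau)\,d\tau, \qquad \mathcal{N}(u) = (I_\alpha \ast |\cdot|^{-b}|u|^p)\,|x|^{-b}|u|^{p-2}u,
\end{equation*}
on the complete metric space
\begin{equation*}
X_T = \Big\{u\in C([0,T);H^s)\cap L^q_T L^r_x(|x|^{-r\gamma}) : \|u\|_{L^\infty_T H^s_x} + \|u\|_{L^q_T L^r_x(|x|^{-r\gamma})} \le M\Big\}
\end{equation*}
endowed with the distance $d(u,v)=\|u-v\|_{L^q_T L^r_x(|x|^{-r\gamma})}$. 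The linear flow is controlled in the weighted norm directly by Theorem~\ref{p1} and in $L^\infty_T H^s_x$ by the isometric action of $e^{it\Delta}$ on $H^s$; the Duhamel term is handled through a $TT^\ast$ plus Christ--Kiselev reduction of Theorem~\ref{p1}, yielding
\begin{equation*}
\Big\|\int_0^t e^{i(t-\tau)\Delta}F\,d\tau\Big\|_{L^q_T L^r_x(|x|^{-r\gamma})} \lesssim \|F\|_{L^{q'}_T L^{r'}_x(|x|^{r'\gamma})}.
\end{equation*}

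The heart of the proof is the nonlinear estimate
\begin{equation*}
\|\mathcal{N}(u)\|_{L^{q'}_T L^{r'}_x(|x|^{r'\gamma})} \lesssim \|u\|_{L^q_T L^r_x(|x|^{-r\gamma})}^{2p-1}.
\end{equation*}
To obtain it I would write $\mathcal{N}(u) = |x|^{-b}(I_\alpha \ast |\cdot|^{-b}|u|^p)|u|^{p-2}u$, apply the Stein--Weiss weighted Hardy--Littlewood--Sobolev inequality to the convolution together with the two $|x|^{-b}$ weights, and then close with H\"older in space---absorbing the remaining $|x|^{-b}$ factor into $|x|^{r'\gamma}$---followed by H\"older in time. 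Since the problem is scale-critical, no positive power of $T$ is produced; the fixed point is therefore set up in a small ball around the linear flow, using dominated convergence to force $\|e^{it\Delta}u_0\|_{L^q_T L^r_x(|x|^{-r\gamma})}\to 0$ as $T\downarrow 0$, which is possible because we may choose $q<\infty$.

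The Sobolev component requires auxiliary estimates for $|\nabla|^s\Phi(u)$ in a standard ($\gamma=0$) Strichartz norm. A Kato--Ponce fractional Leibniz rule splits $\||\nabla|^s\mathcal{N}(u)\|$ into pieces where the derivative falls on $|x|^{-b}$, on the Riesz-potential factor, or on $|u|^{p-2}u$. The first two generate stronger singularities $|x|^{-b-s}$ (equivalently a lower-order Riesz potential $I_{\alpha-s}$), which are absorbed by a second application of Stein--Weiss; the third is pushed back into the weighted $L^q_T L^r_x(|x|^{-r\gamma})$ norm via Hardy's inequality (legal because $\gamma+s<1$, i.e.\ $\gamma<1-s$ from \eqref{as-2}) together with Sobolev embedding. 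The remaining conditions in \eqref{as-1}--\eqref{as-2}---in particular $b>(\alpha-n)/2+(n+2)s/n$, $\gamma<n/p$, and $\gamma<\frac{(p-1)s+1}{p}-\frac{(p-2)(2p-1)n}{4p^2}$---are precisely what is needed to keep every intermediate exponent in the range where Stein--Weiss, Hardy, and the Sobolev embedding simultaneously hold.

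The main obstacle will be this parameter bookkeeping: one has to verify that \eqref{as-1}--\eqref{as-2} leave a non-empty window of $(\gamma,1/q,1/r)$ that is simultaneously $(\gamma,s)$-Schr\"odinger admissible in the open hexahedral region of Theorem~\ref{p1}, compatible with the Stein--Weiss indices arising in the spatial estimates, and matched to a standard admissible pair suitable for $|\nabla|^s\mathcal{N}(u)$. Once this is in place the contraction is routine, since $\mathcal{N}(u)-\mathcal{N}(v)$ obeys the same multilinear bound with one factor replaced by $u-v$, and continuous dependence on the initial datum follows from exactly the same difference estimate.
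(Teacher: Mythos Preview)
There is a concrete gap in the weighted inhomogeneous step. The estimate you write,
\[
\Big\|\int_0^t e^{i(t-\tau)\Delta}F\,d\tau\Big\|_{L^q_T L^r_x(|x|^{-r\gamma})} \lesssim \|F\|_{L^{q'}_T L^{r'}_x(|x|^{r'\gamma})},
\]
does \emph{not} follow from $TT^\ast$ on Theorem~\ref{p1} when $s\neq 0$: with $T=e^{it\Delta}:\dot H^s\to L^q_tL^r_x(|x|^{-r\gamma})$ the adjoint lands in $\dot H^{-s}$, and you cannot compose back with $T$ unless the right-hand pair is $(\tilde\gamma,-s)$-admissible rather than $(\gamma,s)$-admissible. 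A scaling check confirms the displayed estimate is not dilation invariant unless $s=2\gamma$, which is excluded by \eqref{as-2}. The paper fixes this by taking an \emph{asymmetric} inhomogeneous estimate: the output pair $(q,r)\in A_{\gamma,s}$ and the input pair $(\tilde q,\tilde r)\in A_{\tilde\gamma,-s}$ (cf.\ \eqref{2in} and its $s>0$ analogue). The nonlinear bound then reads $\|\mathcal N(u)\|_{S'_{\tilde\gamma,-s}}\lesssim \|u\|_{S_{\gamma,s}}^{2p-1}$, which closes under the relations $1/\tilde q'=(2p-1)/q$, $1/\tilde r'=(2p-1)/r-\alpha/n$, $\tilde\gamma=\gamma$, $b=p\gamma$.

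Your treatment of the $H^s$ component also diverges from the paper and is substantially harder to justify. A Kato--Ponce splitting of $|\nabla|^s\mathcal N(u)$ must confront the fact that $|x|^{-b}$ lies in no Lebesgue space, so the standard product rule is unavailable and a genuinely weighted Leibniz estimate would be needed; the ``Hardy'' step you invoke to send $|\nabla|^s u$ back to the weighted norm does not match exponents without further work. The paper sidesteps all of this. Since the dual Strichartz norm already lives at regularity $-s$, controlling $\|\Phi(u)\|_{\dot H^s}$ and $\|\Phi(u)\|_{L^2}$ reduces to bounding $\|\mathcal N(u)\|_{S'_{\tilde\gamma,-s}}$ and $\||\nabla|^{-s}\mathcal N(u)\|_{S'_{\tilde\gamma,-s}}$ respectively; the latter is handled in one stroke by the Stein--Weiss weighted Sobolev inequality (Lemma~\ref{le3}), which removes the \emph{negative} derivative and returns to the undifferentiated nonlinear estimate already established. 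No fractional product or chain rule is ever needed.
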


The argument in this paper can be also applied to the subcritical case
$p<1+\frac{2-2b+\alpha}{n-2s}$ (i.e., $s>s_c$) but we are not concerned with this easier problem here.
We instead provide the small data global well-posedness and the scattering results as follows:

\begin{thm}\label{Hglo}
Let $n\ge2$ and $0\leq s <1/2$.
Under the same assumptions as in Theorem \ref{Hloc} the local solution extends globally in time with 
\begin{equation}
    u \in C([0,\infty); H^s) \cap L^q ([0,\infty);L^r (|x|^{-r\gamma}))
\end{equation}
if $\|u_0\|_{H^s}$ is sufficiently small.
Furthermore, the solution scatters in $ H^s$, i.e., there exists $\phi \in H^s$ such that
\begin{equation}
    \lim_{t\rightarrow \infty} \|u(t) - e^{it\Delta} \phi\|_{H^s}=0
\end{equation}
\end{thm}

When $n\ge3$ the small data global existence and scattering for the energy-critical case of \eqref{HE} with potential was recently addressed in \cite{KS}. 
The scattering for non-small data was treated in \cite{GX} for the energy-critical case when $n=3$ and $u_0\in \dot{H}^1$,
and was handled in \cite{GW2} for the
$\dot H^{\frac12}$ critical focusing homogeneous equation 
\eqref{HE} with $b=0$ when $n=5$ and $u_0\in H^1(\subset H^{1/2})$ is radial.

The common difficulty in the case $s<0$ comes from deriving a contraction from the nonlinearity since fractional Leibnitz and chain rules are not applicable well with derivative of negative order.
To overcome this problem, we take advantage of smoothing effect in the weighted setting \eqref{1} when $s<0$.
Indeed, \eqref{1} is equivalent to
\begin{equation}\label{T*}
\bigg\| \,|\nabla|^s\int_{-\infty}^\infty  e^{-i\tau\Delta} F(\cdot, \tau)d\tau \bigg\|_{L^2 } \lesssim \| F \|_{L_t^{\tilde{q}'}L_x^{\tilde{r}'} (|x|^{\tilde{r}'\tilde{\gamma}})}
\end{equation}
by duality.
If we additionally assume $q>\tilde{q}'$, then we can deduce some inhomogeneous estimates 
\begin{equation}\label{TT*}
\bigg\| \int_{0} ^{t} e^{i(t-\tau)\Delta} F(\cdot,\tau)d\tau \bigg\|_{L_t^{q}L_x^{r} (|x|^{-r\gamma})} \lesssim
\| F \|_{L_t^{\tilde{q}'}L_x^{\tilde{r}'} (|x|^{\tilde{r}'\tilde{\gamma}})}
\end{equation}
without involving any derivative from applying the Christ-Kiselev Lemma \cite{CK} together with the standard $TT^\ast$ argument.
In this regard, we need to have the common range of $s$, $-1/2< s<1/2$, for which both \eqref{1} and \eqref{T*} hold, although \eqref{1} holds more widely for $-1/2<s<n/2$. 
The inhomogeneous estimates \eqref{TT*} not only make the Leibnitz and chain rules superfluous, but also make it easier to utilize the contraction mapping principle.
As a result, we obtain the following local well-posedness result in the critical case below $L^2$ and the corresponding scattering results.

\begin{thm}\label{Hloc-1}
Let $n\ge2$ and $-1/2< s <0$.
Assume that
\begin{equation}\label{as-111}
n-2-2s<\alpha<n\quad \textrm{and} \quad 0<b\leq \frac{\alpha-n}{2}+s+1.
\end{equation}
Then for $u_0 \in \dot H^s(\mathbb{R}^n)$ there exist $T>0$ and a unique solution $u \in C([0,T); \dot H^s) \cap L^q ([0,T);L^r (|x|^{-r\gamma}))$ to the problem \eqref{HE} with $p=1+\frac{2-2b+\alpha}{n-2s}$ if
\begin{equation}\label{as-22}
-s<\gamma <\min\Big\{\frac{(p-1)s+1}{p}-\frac{(p-2)(2p-1)n}{4p^2}, s-\frac{n}2+\frac{n}{p}+\frac2{2p-1}, \frac{n}2\Big\}
\end{equation}
and $(q,r)$ is any $(\gamma,s)$-Schr\"odinger admissible pair. 
Furthermore, the continuous dependence on the initial data holds.
\end{thm}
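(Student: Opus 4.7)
The plan is to run a contraction mapping argument on the Duhamel map
\[
\Phi(u)(t) := e^{it\Delta}u_0 - i\lambda\int_0^t e^{i(t-\tau)\Delta}F(u)(\tau)\,d\tau,\qquad F(u) := (I_\alpha\ast|\cdot|^{-b}|u|^p)\,|x|^{-b}|u|^{p-2}u,
\]
in the complete metric ball
\[
X_T := \bigl\{u \in C([0,T];\dot H^s)\cap L^q([0,T];L^r(|x|^{-r\gamma})) : \|u\|_{X_T}\leq M\bigr\}
\]
endowed with the weaker $L^q_tL^r_x(|x|^{-r\gamma})$-distance. Theorem \ref{p1} immediately bounds the free evolution by $\|e^{it\Delta}u_0\|_{L^q_tL^r_x(|x|^{-r\gamma})}\lesssim \|u_0\|_{\dot H^s}$ since $(q,r)$ is $(\gamma,s)$-Schr\"odinger admissible, so the nontrivial content sits in the Duhamel term.

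Since $s<0$ rules out the usual fractional Leibniz/chain rules on $F(u)$, the idea is to pair the admissible triple $(q,r,\gamma)$ on the output side with an auxiliary triple $(\tilde q,\tilde r,\tilde\gamma)$ on the input side chosen to be $(\tilde\gamma,-s)$-Schr\"odinger admissible. Dualizing Theorem \ref{p1} for that triple and gluing in time via the Christ-Kiselev lemma \cite{CK} (legitimate because $q>\tilde q'$, a fact encoded in \eqref{as-22}) yields the \emph{derivative-free} inhomogeneous estimate
\[
\Big\|\int_0^t e^{i(t-\tau)\Delta} F(u)\,d\tau\Big\|_{L^\infty_t\dot H^s\,\cap\, L^q_tL^r_x(|x|^{-r\gamma})} \lesssim \|F(u)\|_{L^{\tilde q'}_tL^{\tilde r'}_x(|x|^{\tilde r'\tilde\gamma})}.
\]
The task then reduces to bounding the right-hand side purely in terms of $\|u\|_{L^q_tL^r_x(|x|^{-r\gamma})}$.

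For that nonlinear estimate I would apply a weighted Hardy-Littlewood-Sobolev inequality to $I_\alpha\ast(|x|^{-b}|u|^p)$, followed by a weighted H\"older in $x$ absorbing the remaining factor $|x|^{-b}|u|^{p-2}u$ against the output weight $|x|^{\tilde\gamma}$, and finally H\"older in time. This should produce
\[
\|F(u)\|_{L^{\tilde q'}_tL^{\tilde r'}_x(|x|^{\tilde r'\tilde\gamma})} \lesssim T^{\theta}\,\|u\|_{L^q_tL^r_x(|x|^{-r\gamma})}^{\,2p-1}
\]
for some $\theta>0$. The three-sided range of $\gamma$ in \eqref{as-22} together with \eqref{as-111} is engineered precisely so that the whole exponent system is compatible: the lower bound $\gamma>-s$ and the upper bound $\frac{(p-1)s+1}{p}-\frac{(p-2)(2p-1)n}{4p^2}$ place both triples inside the admissible region of Theorem \ref{p1} while solving the HLS relation, the upper bound $s-\frac{n}{2}+\frac{n}{p}+\frac{2}{2p-1}$ forces $\theta>0$, and $\gamma<n/2$ together with \eqref{as-111} keeps every weight power of $|x|$ nonnegative.

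The main obstacle is exactly this exponent bookkeeping --- five separate scaling/admissibility identities (two admissibility conditions, HLS, spatial H\"older, and a strictly positive time-H\"older exponent) must be satisfied simultaneously with all weight powers of $|x|$ in the nonnegative range. Once that closes, exactly the same chain of inequalities, applied to $F(u)-F(v)$ via the standard pointwise bound $\bigl||u|^{p-2}u-|v|^{p-2}v\bigr|\lesssim (|u|^{p-2}+|v|^{p-2})|u-v|$ and its analogue in the HLS factor, gives a Lipschitz estimate with factor $CM^{2p-2}T^\theta$. Choosing $M\sim C\|u_0\|_{\dot H^s}$ and $T$ small enough to make this $<1/2$ yields the unique fixed point in $X_T$, and continuous dependence follows by rerunning the same contraction estimate on differences of solutions with nearby data.
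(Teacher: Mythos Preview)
Your overall architecture matches the paper's: Duhamel, Christ--Kiselev with a $(\gamma,s)$-admissible output paired against a $(\tilde\gamma,-s)$-admissible input, then weighted HLS plus H\"older for the nonlinear estimate. The genuine gap is your smallness mechanism. You write
\[
\|F(u)\|_{L^{\tilde q'}_tL^{\tilde r'}_x(|x|^{\tilde r'\tilde\gamma})} \lesssim T^{\theta}\,\|u\|_{L^q_tL^r_x(|x|^{-r\gamma})}^{\,2p-1}
\]
with $\theta>0$, and you attribute $\theta>0$ to the bound $\gamma<s-\tfrac{n}{2}+\tfrac{n}{p}+\tfrac{2}{2p-1}$. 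But this is the \emph{critical} exponent $p=1+\tfrac{2-2b+\alpha}{n-2s}$, and the time H\"older closes with $\theta=0$: in the paper's bookkeeping one has exactly $\tfrac{1}{\tilde q'}=\tfrac{2p-1}{q}$, so there is no leftover time power. The bound $\gamma<s-\tfrac{n}{2}+\tfrac{n}{p}+\tfrac{2}{2p-1}$ is not buying time slack; it arises from making the lower bound $\tfrac{n}{2}(\tfrac12-\tfrac1p)+\tfrac{\gamma-s}{2}$ on $1/q$ compatible with the upper bound $\tfrac{1}{2p-1}$ (i.e.\ $\tilde q\ge 2$), so it only guarantees the \emph{existence} of an admissible $(q,r)$.

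Consequently your choice $M\sim C\|u_0\|_{\dot H^s}$ together with ``$T$ small'' cannot close the contraction for general data. The paper's fix is the standard critical-case device: since $\|e^{it\Delta}u_0\|_{L^q_t(\mathbb{R};L^r_x(|x|^{-r\gamma}))}<\infty$ by Theorem~\ref{p1}, dominated convergence gives $\|e^{it\Delta}u_0\|_{L^q_t([0,T];L^r_x(|x|^{-r\gamma}))}\le\varepsilon$ for $T$ small. One then takes the radius in the auxiliary Strichartz norm to be $N=2\varepsilon$ (with a separate, possibly large, bound $M$ on the $\dot H^s$ norm), and the contraction constant is $CN^{2p-2}$ rather than $CM^{2p-2}T^\theta$. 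If you replace your $T^\theta$ mechanism by this $\varepsilon$-smallness of the free flow, the rest of your outline goes through essentially as in the paper.
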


\begin{thm}\label{Hglob}
Let $n\ge2$ and $-1/2< s <0$.
Under the same assumptions as in Theorem \ref{Hloc-1} the local solution extends globally in time with 
\begin{equation*}
    u \in C([0,\infty); \dot H^s) \cap L^q ([0,\infty);L^r (|x|^{-r\gamma}))
\end{equation*}
if $\|u_0\|_{\dot H^s}$ is sufficiently small.
Furthermore, the solution scatters in $\dot H^s$, i.e., there exists $\phi \in \dot H^s$ such that
\begin{equation*}
    \lim_{t\rightarrow \infty} \|u(t) - e^{it\Delta} \phi\|_{\dot H^s}=0
\end{equation*}
\end{thm}

The assumption on $\alpha$ in \eqref{as-1} is optimal when $n=2$.
$\gamma$ is an index that occurs only in \eqref{1}, and \eqref{as-2} and \eqref{as-22} are the entire ranges of $\gamma$ that can be used to obtain the theorems using the optimal estimate \eqref{1}.

The other sections of this paper is organized as follows.
In Sections \ref{se2} and \ref{se3}, we prove Theorem \ref{p1} and Proposition \ref{exc}, respectively.
Sections \ref{sec4} and \ref{sec5} are devoted to proving the well-posedness results, Theorems \ref{Hloc}, \ref{Hglo}, \ref{Hloc-1} and \ref{Hglob}, making use of the weighted Strichartz estimates studied in the previous sections.

Throughout this paper, the letter $C$ stands for a positive constant which may be different at each occurrence.
We also denote $A\lesssim B$ to mean $A\leq CB$ with unspecified constants $C>0$.

\section{Preliminaries}

In this section, we present some preliminary lemmas that will be used for the proofs of the theorems. 
The first lemma concerns the complex interpolation space identities used for the proof of Theorem \ref{p1}.

\begin{lem}[\cite{BL}]\label{idd}
Let $0<\theta<1$, $1\leq p_0,p_1<\infty$ and $s_0,s_1\in\mathbb{R}$.
Then the following identities hold:
\begin{itemize}
\item With $1/p=(1-\theta)/p_0+\theta/p_1$ and $w=w_0^{p(1-\theta)/p_0}w_1^{p\theta/p_1}$,
$$(L^{p_0}(w_0),L^{p_1}(w_1))_{[\theta]}=L^p(w)$$
and for two complex Banach spaces $A_0,A_1$,
$$(L^{p_0}(A_0),L^{p_1}(A_1))_{[\theta]}=L^p((A_0,A_1)_{[\theta]}).$$
\item With $s=(1-\theta)s_0+\theta s_1$ and $s_0\neq s_1$,
$$(\dot{H}^{s_0},\dot{H}^{s_1})_{[\theta]}=\dot{H}^s.$$
\end{itemize}
Here, $(\cdot\,,\cdot)_{[\theta]}$ denotes the complex interpolation functor.
\end{lem}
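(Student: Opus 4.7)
The plan is to reduce each of the three identities to classical Calderón-style complex interpolation arguments, handling in turn the scalar weights, the Banach-valued generalization, and the homogeneous Sobolev peculiarity.

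First, for the scalar weighted identity $(L^{p_0}(w_0), L^{p_1}(w_1))_{[\theta]} = L^p(w)$, I would use the isometric isomorphisms $\Phi_i\colon L^{p_i}(w_i) \to L^{p_i}$ given by $\Phi_i f = w_i^{1/p_i} f$ to transport the problem toward the unweighted Calderón identity $(L^{p_0}, L^{p_1})_{[\theta]} = L^p$. The subtlety is that $\Phi_0$ and $\Phi_1$ use different multipliers, so one does not obtain a single isometry of the interpolation couple; instead one constructs the complex interpolating function $F(z)$ on the strip $\{0\le \mathrm{Re}\,z\le 1\}$ whose boundary traces are weighted appropriately, and shows $F(\theta)$ lies in $L^p(w)$ with the precise weight $w = w_0^{p(1-\theta)/p_0} w_1^{p\theta/p_1}$ emerging from balancing the two boundary weights along the vertical line $\mathrm{Re}\,z = \theta$ via a three-lines lemma argument.

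Next, for the vector-valued version $(L^{p_0}(A_0), L^{p_1}(A_1))_{[\theta]} = L^p((A_0,A_1)_{[\theta]})$, I would follow Calderón's original construction. For the inclusion ``$\subset$'', one observes that an analytic extension $F(z)$ of $f$ into $L^{p_0}(A_0) + L^{p_1}(A_1)$ restricts, for a.e.\ $x$, to an analytic extension of $f(x)$ witnessing membership in $(A_0, A_1)_{[\theta]}$, and the resulting fibrewise norm bound integrates to the $L^p$ norm. For the reverse inclusion, one uses a simple-function density argument combined with pointwise Calderón factorization to synthesize a global analytic extension out of the pointwise ones. The principal technical point is measurability of the Bochner-valued analytic functions, which is handled by standard approximation.

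Finally, for the Sobolev identity, I would transfer the problem to the Fourier side, where $\dot{H}^s$ is unitarily equivalent to the weighted Hilbert space $L^2(|\xi|^{2s}\,d\xi)$, and then apply the first part with $p_0 = p_1 = 2$. The weight balances correctly because $|\xi|^{2s_0(1-\theta)} |\xi|^{2s_1\theta} = |\xi|^{2s}$. The hard part will be the setup rather than the interpolation itself: $\dot{H}^s$ is not a Banach space in the naive distributional sense, so one must realize it as tempered distributions modulo polynomials, and verify that $\dot{H}^{s_0}$ and $\dot{H}^{s_1}$ embed continuously into a common Hausdorff topological vector space so that the complex interpolation functor $(\cdot\,,\cdot)_{[\theta]}$ is even defined on this couple. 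The hypothesis $s_0 \neq s_1$ is crucial here: it ensures the two endpoint spaces are distinct and it is precisely what allows the weighted $L^2$ interpolation on the Fourier side to transfer back as a genuine identification $\dot{H}^s$ rather than a mere continuous embedding.
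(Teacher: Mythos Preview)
The paper does not prove this lemma at all: it is stated with a citation to Bergh--L\"ofstr\"om \cite{BL} and used as a black box. Your outline of the three standard arguments (Calder\'on's strip construction for weighted $L^p$, the density-plus-fibrewise argument for vector-valued $L^p$, and Fourier transfer to weighted $L^2$ for $\dot H^s$) is a reasonable sketch of what one finds in that reference, so there is nothing substantive to compare.

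One small comment on your third item: the role you assign to the hypothesis $s_0 \neq s_1$ is slightly off. The weighted-$L^2$ computation on the Fourier side goes through verbatim when $s_0 = s_1$ and yields the trivial identity $(\dot H^{s_0}, \dot H^{s_0})_{[\theta]} = \dot H^{s_0}$; the hypothesis is there only because some formulations of the abstract interpolation functor require a genuine couple of distinct spaces, not because the endpoint spaces would otherwise fail to embed in a common ambient space or because the identification would degrade to a mere embedding.
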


The others used for the well-posedness are the Hardy-Littlewood-Sobolev type inequality and a weighted version of the Sobolev embedding, as follows in turn: 
\begin{lem}[\cite{LL, Sa}]\label{HLS}
Let $0<\alpha<n$ and $1<q,r,s<\infty$. If $\frac1q+\frac1r+\frac1s=1+\frac\alpha n$, then
\begin{equation*}
\|(I_\alpha \ast f)g\|_{q'}\leq C \|f\|_r\|g\|_s.
\end{equation*}
\end{lem}

\begin{lem}[\cite{SW}]\label{le3}
Let $n\ge 1$ and $0<s<n$. If
\begin{equation*}
1<\tilde r_1'\leq \tilde r_2'<\infty,\quad -\frac{n}{\tilde r_2'}<b\leq a<\frac{n}{\tilde r_1} \quad\text{and}\quad a-b-s=\frac{n}{\tilde r_2'}-\frac{n}{\tilde r_1'},
\end{equation*}
then
\begin{equation*}
\||x|^{b}f\|_{L^{\tilde r_2'}} \leq C_{a, b, \tilde r_1', \tilde r_2'} \||x|^{a} |\nabla|^s f\|_{L^{\tilde r_1'}}.
\end{equation*}
\end{lem}

\section{Weighted Strichartz estimates}\label{se2}
In this section we prove Theorem \ref{p1} and Proposition \ref{exc}.

\subsection{Proof of Theorem \ref{p1}}
When $0\leq s<n/2$, we first recall the classical Strichartz estimates \cite{St,GV4,KT}
\begin{equation}\label{s0}
\| e^{it\Delta} f \|_{L_t^{q} L_x^{r}} \lesssim \| f\|_{\dot{H}^{s}},
\end{equation}
where
\begin{equation*}
0\leq \frac{1}{q}\leq \frac{1}{2},\quad 0< \frac1r \leq \frac{1}{2},\quad s=n(\frac{1}{2}-\frac{1}{r})-\frac{2}{q},
\end{equation*}
and note that this condition corresponds to the closed quadrangle with vertices $B,A,D,C$ except the closed segment $[A, D]$ in Figure \ref{fig1}.
We then obtain \eqref{1} on the open quadrangle with vertices $E, H, I, C$ including the open segments $(E, H)$ and $(C, I)$.
By making use of the complex interpolation between them, we finish the proof.

\subsubsection{Estimates on the region $EHIC$}
When $-1/2<s<n/2$, we now show that the following desired estimate holds:
\begin{equation}\label{s1}
\big\| |x|^{-\gamma} e^{it\Delta} f \big\|_{L_t^{q}L_x^{2}} \lesssim \| f\|_{\dot{H}^s}
\end{equation}
where
\begin{equation*}
0\leq \frac{1}{q} \leq \frac{1}{2}, \quad \frac{1}{q}< \gamma < \frac{n}{2},\quad s=\gamma-\frac{2}{q}.
\end{equation*}
By the complex interpolation, we reduce it to the two cases $q=2$ and $q=\infty$
which correspond to the open segments $(E, H)$ and $(C, I)$, respectively.
The case $q=2$ is already well known as the Kato-Yajima smoothing estimates\footnote{The estimate \eqref{ky} was discovered by Kato and Yajima \cite{KY} for $1/2<\gamma_0 \leq 1$. (We also refer to \cite{BK} for an alternative proof.) After then, it turns out that \eqref{ky} holds in the optimal range $1/2 <\gamma_0< n/2$. See \cite{Su, V, W}.}
\begin{equation}\label{ky}
	\| \, |x|^{-\gamma_0} e^{it\Delta} f \, \|_{L_{t}^2 L_x^2} \lesssim \| f \|_{\dot{H}^{s_0}}
\end{equation}
where $1/2 < \gamma_0 < n/2$ and $s_0=\gamma_0-1$.
For the case $q=\infty$,
we recall the Hardy inequality (see e.g. \cite{MS})
\begin{equation*}
\big\||x|^{-\gamma_1} g \big\|_{L^2} \lesssim \|g \|_{\dot{H}^{\gamma_1}},
\end{equation*}
where $0\leq \gamma_1 <n/2$, and then take $g=e^{it\Delta}f$ to deduce
\begin{equation}\label{we}
\big\||x|^{-\gamma_1} e^{it\Delta} f \big\|_{L_t^{\infty} L_x^2} \lesssim \|f\|_{\dot{H}^{s_1}}
\end{equation}
where $0\leq \gamma_1<n/2$ and $s_1=\gamma_1$.

We now make use of the complex interpolation between \eqref{ky} and \eqref{we}
to fill in the open quadrangle with vertices $E, H, I, C$.
First we need to use the dual estimates of \eqref{ky} and \eqref{we},
\begin{equation}\label{kyd}
\left\|\int_{\mathbb{R}} e^{-i\tau\Delta}F(\cdot, \tau) d\tau \right\|_{\dot{H}^{-s_0}} \lesssim \|F\|_{L_{t}^{2} L_{x}^{2}(|x|^{2\gamma_0})}
\end{equation}
for $1/2<\gamma_0< n/2$ and $s_0=\gamma_0-1$,
and
\begin{equation}\label{wwd}
\left\|\int_{\mathbb{R}} e^{-i\tau\Delta}F(\cdot, \tau) d\tau \right\|_{\dot{H}^{-s_1}} \lesssim \|F\|_{L_{t}^{1} L_{x}^{2}(|x|^{2\gamma_1})}
\end{equation}
for $0\leq \gamma_1<n/2$ and $s_1=\gamma_1$, respectively.
This is because the complex interpolation space identities in Lemma \ref{idd} are not applied to \eqref{we} involving the $L_t^{\infty}$ norm.

Using the complex interpolation between \eqref{kyd} and \eqref{wwd}, we now see
\begin{equation*}
\left\|\int_{\mathbb{R}} e^{-i\tau\Delta}F(\cdot, \tau) d\tau \right\|_{ ( \dot{H}^{-s_0},\, \dot{H}^{-s_1} )_{[\theta]}} \lesssim \|F\|_{( L_t^{2}L_x^{2}(|x|^{2\gamma_0}),\, L_t^{1}L_x^{2}(|x|^{2\gamma_1}) )_{[\theta]}},
\end{equation*}
and then we make use of the lemma to get
\begin{equation}\label{s2d}
\left\|\int_{\mathbb{R}} e^{-i\tau\Delta}F(\cdot, \tau) d\tau \right\|_{\dot{H}^{-s}} \lesssim \|F\|_{L_t^{q'}L_x^{2}(|x|^{2\gamma})}
\end{equation}
where
\begin{equation}\label{cs1d}
\frac{1}{q}=\frac{1-\theta}{2},  \quad
s=s_0(1-\theta)+ s_1\theta, \quad \gamma=\gamma_0(1-\theta)+\gamma_1 \theta
\end{equation}
under the conditions
\begin{equation}\label{cin}
\frac{1}{2}<\gamma_0< \frac{n}{2}, \quad s_0=\gamma_0-1, \quad  0\leq \gamma_1<\frac{n}{2},\quad  s_1=\gamma_1, \quad 0< \theta< 1.
\end{equation}

By eliminating the redundant exponents $\theta,s_0,s_1,\gamma_0,\gamma_1$ here, all the conditions on $q,s,\gamma$ for which the equivalent estimate \eqref{s2d} of \eqref{s1} holds
are summarized as  
\begin{equation}\label{s1c22}
0< \frac{1}{q} < \frac{1}{2}, \quad \frac{1}{q}< \gamma < \frac{n}{2},\quad s=\gamma-\frac{2}{q}
\end{equation}
when $-1/2<s<n/2$, as desired.
Indeed, we first use the second and fourth ones of \eqref{cin} to remove the exponents $s_0, s_1$ in the second one of \eqref{cs1d} as
\begin{equation}\label{ccin}
\gamma_0(1-\theta)+\gamma_1 \theta=s+1-\theta.
\end{equation}
By \eqref{ccin}, the last one of \eqref{cs1d} can be rephrased as $\theta=s+1-\gamma$ while
the first one of \eqref{cin} is converted to
\begin{equation}\label{cin2}
s-\frac{(n-2)(1-\theta)}{2} <\gamma_1 \theta <s+\frac{1-\theta}{2}.
\end{equation}
To remove the redundant exponent $\gamma_1$,
we then make each lower bound of $\gamma_1$ in the third of \eqref{cin} and \eqref{cin2} less than all the upper bounds in turn.
Then it follows that
\begin{equation}\label{ccc}
s-\frac{n-2}{2}<\theta<1+2s.
\end{equation}
Now all the conditions on $\theta$ are the first one of \eqref{cs1d}, the last one of \eqref{cin}, \eqref{ccc} and $\theta=s+1-\gamma$. Namely,
\begin{equation}\label{ccin2}
\theta=1-\frac{2}{q},\quad 0< \theta< 1, \quad s-\frac{n-2}{2}<\theta<1+2s, \quad  \theta=s+1-\gamma.
\end{equation}
Finally we insert the first one of \eqref{ccin2} into the second, third and fourth in turn to get
$$
0<\frac{1}{q}<\frac{1}{2},\quad  -\frac{1}{q}<s<\frac{n}{2}-\frac{2}{q},\quad s=\gamma-\frac{2}{q}
$$
when $-1/2<s<n/2$.
Putting the last one into the second one here implies the second condition of \eqref{s1c22}.

\subsubsection{Further interpolation}
To complete the proof of Theorem \ref{p1}, we further interpolate between the following dual estimates of 
\eqref{s0} with $q, r, s$ replaced by $a, b, \sigma$ and \eqref{s1} with $q,s,\gamma$ replaced by $a,\sigma,\lambda$:
\begin{equation*}
\left\|\int_{\mathbb{R}} e^{-i\tau\Delta}F(\cdot, \tau) d\tau \right\|_{\dot{H}^{-\sigma}} \lesssim \|F\|_{L_t^{a'}L_x^{b'}},
\end{equation*}
where $2\leq a, b \leq \infty$, $b\neq \infty$, $\sigma=n(1/2-1/b)-2/a$ and $0\leq \sigma< n/2$,
and
\begin{equation*}
\left\|\int_{\mathbb{R}} e^{-i\tau\Delta}F(\cdot, \tau) d\tau \right\|_{\dot{H}^{-\tilde{\sigma}}} \lesssim \|F\|_{L_t^{\tilde{a}'}L_x^{2}(|x|^{2\lambda})},
\end{equation*}
where $2\leq \tilde{a} \leq \infty$, $1/\tilde{a}< \lambda<n/2$, $\tilde{\sigma}=\lambda-2/\tilde{a}$ and $-1/2<\tilde{\sigma}<n/2$.
By the complex interpolation and Lemma \ref{idd} as before, it follows then that
\begin{equation}\label{int4}
\left\|\int_{\mathbb{R}} e^{-i\tau\Delta}F(\cdot, \tau) d\tau \right\|_{\dot{H}^{-s}} \lesssim \|F\|_{L_t^{q'}L_x^{r'}(|x|^{r'\gamma })}
\end{equation}
where
\begin{equation}\label{cc}
\frac{1}{q}=\frac{1-\theta}{a}+\frac{\theta}{\tilde{a}},\quad \frac{1}{r}=\frac{1-\theta}{b}+\frac{\theta}{2},\quad \gamma=\lambda\theta,\quad s=\sigma(1-\theta) +\tilde{\sigma} \theta
\end{equation}
under the conditions
\begin{equation}\label{cc2}
0\leq \frac{1}{a} \leq \frac{1}{2},\quad 0< \frac{1}{b} \leq \frac{1}{2},\quad \sigma=n(\frac{1}{2}-\frac{1}{b})-\frac{2}{a}, \quad 0\leq \sigma< \frac{n}{2},
\end{equation}
\begin{equation}\label{cc3}
0\leq \frac{1}{\tilde{a}} \leq \frac{1}{2}, \quad \frac{1}{\tilde{a}}< \lambda < \frac{n}{2},\quad \tilde{\sigma}=\lambda-\frac{2}{\tilde{a}}, \quad -\frac{1}{2}< \tilde{\sigma}<\frac{n}{2}, \quad 0< \theta< 1.
\end{equation}

We first combine the last condition of \eqref{cc} with the third ones of \eqref{cc3} and \eqref{cc} in turn to remove $\tilde{\sigma}, \lambda$ as
$$ \sigma(1-\theta) =s-(\lambda-\frac{2}{\tilde{a}})\theta =s-\gamma+\frac{2\theta}{\tilde{a}}. $$
By using this and the first two conditions of \eqref{cc}, we then eliminate the redundant exponents $a,b$ and $\sigma$ in \eqref{cc2} as follows:
\begin{gather}\label{un}
\frac{1}{q}-\frac{1-\theta}{2} \leq \frac{\theta}{\tilde{a}} \leq \frac{1}{q},
\quad \frac{\theta}{2}< \frac{1}{r} \leq \frac{1}{2},
\\ s=n(\frac{1}{2}-\frac{1}{r})-\frac{2}{q}+\gamma, \quad \frac{\gamma-s}{2}\leq \frac{\theta}{\tilde{a}}<\frac{n(1-\theta)}{4}+\frac{\gamma-s}{2}\label{un2}.
\end{gather}
Note here that the first condition of \eqref{un2} is exactly same as the last one of \eqref{gsad},
from which the lower bound in the second one of \eqref{un2} can be replaced by $\frac1q-\frac{n}2(\frac12-\frac1r)$.
By using the third condition of \eqref{cc3}, the fourth one of \eqref{cc3} can be also replaced by
\begin{equation*}
\frac{2}{\tilde{a}}-\frac{1}{2}<\lambda<\frac{2}{\tilde{a}}+\frac{n}{2},
\end{equation*}
but this is automatically satisfied by the first two conditions of \eqref{cc3} which are 
replaced by 
\begin{equation}\label{un5}
0\leq \frac{\theta}{\tilde{a}}\leq \frac{\theta}{2}, \quad \frac{\theta}{\tilde{a}}<\gamma<\frac{n\theta}{2}
\end{equation}
multiplying by $\theta$ and using the third one of \eqref{cc}.

To eliminate the redundant exponent $\tilde{a}$ in \eqref{un}, \eqref{un2} and \eqref{un5},
we make each lower bound of $1/\tilde{a}$ less than all the upper ones in turn.
It follows then that
\begin{align}
 0\leq \frac{1}{q} &\leq \frac{1}{2},\quad  \frac{2}{q}<n(\frac{1}{2}-\frac{1}{r})+2\gamma, \quad \gamma>0,\label{un33}\\
 &\frac{1}{q}-\frac{n}{2}(\frac{1}{2}-\frac{1}{r})\leq \frac{\theta}{2}<\frac{1}{2}-\frac{1}{q}+\gamma. \label{un55}
\end{align}
Indeed, starting from the one of \eqref{un},
we get the redundant condition $\theta\leq1$,
$\theta/2<1/2-(2/q-\gamma+s)/(n+2)$,
the first upper bound of $1/q$ in \eqref{un33}
and the upper bound of $\theta/2$ in \eqref{un55}.
But here the second condition can be removed by substituting 
the first one of \eqref{un2} into it and using the second one of \eqref{un}.
Next, from the one of \eqref{un2},
we get the redundant condition $r \ge2$, $\theta/2<1-1/r-2/(nq)+(\gamma-s)/n$, the lower bound of $\theta/2$ in \eqref{un55} and the second one of \eqref{un33}.
But here the second condition can be removed by substituting
the first one of \eqref{un2} into it and using the second one of \eqref{un}.
Lastly from the lower bound of $\theta/\tilde{a}$  in \eqref{un5},
we have the lower bound of $1/q$ in \eqref{un33}, $\theta/2<1/2+(\gamma-s)/n$, $\theta\ge0$ and the last one of \eqref{un33}.
But here, the second one can be eliminated by substituting 
the first one of \eqref{un2} into it and using $\theta/2<1/r$ together with $1/q\ge0$,
and the third one is clearly redundant.

All the requirements on $\theta$ are now summarized as follows:
\begin{gather}\label{th}
	0< \theta< 1,\quad  \frac{\gamma}{n}< \frac{\theta}{2} <\frac{1}{r},\\
	\frac{1}{q}-\frac{n}{2}(\frac{1}{2}-\frac{1}{r})\leq \frac{\theta}{2}<\frac{1}{2}-\frac{1}{q}+\gamma.\label{th4}
\end{gather}
We eliminate the first condition of \eqref{th} which is automatically satisfied by the second one, and further eliminate $\theta$ in \eqref{th} and \eqref{th4} to reduce to
\begin{equation}\label{2dw}
	\frac{\gamma}{n}<\frac{1}{r}\leq\frac12
\end{equation}
by making each lower bound of $\theta$ less than all the upper ones in turn.
Indeed,
from the lower bound of $\theta/2$ in \eqref{th},
we have $\gamma/n<1/r$ and $1/q<1/2+(n-1)\gamma/n$.
But here the latter is trivially valid since $q\ge2$ and $\gamma>0$.
From the lower bound in \eqref{th4}, we see
$1/q<n/2(1/2-1/r)+1/r$ and $2/q<n/2(1/2-1/r)+1/2+\gamma$,
but here, the latter can be removed by the second one of \eqref{un33} together with $1/q\leq 1/2$ and the former is automatically satisfied by $1/q\leq 1/2$ and $1/r< 1/2$.
Here, we do not need to consider the case $r=2$ because it is already obtained in the previous subsection.

All the requirements so far are summarized by \eqref{un33}, \eqref{2dw} and the first one of \eqref{un2} when $-1/2<s<n/2$, as those in Theorem \ref{p1}.
Since \eqref{int4} is equivalent to \eqref{1}, the proof is now complete.

\subsection{Proof of Proposition \ref{exc}}\label{se3}

We construct some examples for which \eqref{1} fails
if either $\gamma/n \ge 1/r $ or $2/q>n(1/2-1/r)+2\gamma$.

\subsubsection{The part $\gamma/n\ge 1/r$}
We consider a positive $\phi \in \mathbb{C}_0^{\infty}(\mathbb{R}^n)$ compactly supported in  $\{\xi\in\mathbb{R}^n:1<|\xi|<2\}$,
and set $\widehat{f}(\xi) = \phi(\xi)$.
Then, $\|f\|_{L^2} \sim 1$ by the Plancherel theorem,
and
\begin{align*}
 |\nabla|^{-s} e^{it\Delta} f(x)
= \frac{1}{(2\pi)^{n/2}} \int_{\mathbb{R}^n}|\xi|^{-s}  e^{ix\cdot \xi-it|\xi|^2} \phi(\xi) d\xi.
\end{align*}
For $x \in B(0,1/8)$ and $t\in(-1/16,1/16)$,
we note here that
$|x \cdot\xi  -t|\xi|^2|\leq 1/2$
by the support condition of $\phi$,
to conclude
$$
\big| |\nabla|^{-s} e^{it\Delta} f(x)\big|
\gtrsim \bigg| \int_{\mathbb{R}^n} |\xi|^{-s} \cos(x\cdot \xi-t|\xi|^2) \phi(\xi) d\xi \bigg|
\gtrsim  \cos( 1/2)\int_{\mathbb{R}^n} \phi(\xi) d\xi\sim 1
$$
for any $s\in\mathbb{R}$.
Hence it follows that
$$
\left\| |\nabla|^{-s} e^{it\Delta} f\right\|_{L_x^r (|x|^{-r\gamma})}
\gtrsim  \bigg( \int_{|x|<\frac{1}{8}} |x|^{-r\gamma } dx \bigg)^{1/r}
$$
whenever $t\in(-1/16,1/16)$.
However, the right-hand side here blows up if $\gamma/n\ge 1/r$,
and so the estimate \eqref{1} fails if $\gamma/n\ge 1/r$.

\subsubsection{The part $2/q>n(1/2-1/r)+2\gamma$}
By the scaling condition, the estimate \eqref{1} fails clearly if $2/q\geq n(1/2-1/r)+\gamma$ when $s\geq 0$.

We only need to consider the case $s<0$.
Consider a positive $\phi \in \mathbb{C}_0^{\infty}(\mathbb{R})$ compactly supported in the interval $[-1,1]$ and set
$$\widehat{f}(\xi) = \phi(\xi_1 -K) \prod_{k=2}^n \phi(\xi_k)$$
where $K$ is a positive constant as large as we need.
Then, $\|f\|_{L^2} \sim 1$ by the Plancherel theorem,
and by the change of variable $\xi_1 \rightarrow \xi_1 +K$,
\begin{align*}
&|\nabla|^{-s} e^{it\Delta} f(x)\\
&\qquad= \frac{1}{(2\pi)^{n/2}}  \int_{\mathbb{R}^n} |\xi|^{-s} e^{ix\cdot \xi-it|\xi|^2}\phi(\xi_1 -K) \prod_{k=2}^n \phi(\xi_k) d\xi \\
&\qquad=  \frac{1}{(2\pi)^{n/2}} e^{ix_1 K-itK^2} \int_{\mathbb{R}^n}\big((\xi_1+K)^2+\sum_{k=2}^{n} \xi_k^2\big)^{-\frac{s}{2}} e^{ix\cdot\xi-2Kit\xi_1 -it|\xi|^2} \prod_{k=1}^n \phi(\xi_k) d\xi.
\end{align*}
Now we set
$$B:= \Big\{x \in \mathbb{R}^n \ :\  |x_1-2Kt| \leq \frac{1}{4n},\ |x_k| \leq \frac{1}{4n}\ \text{for } k=2,...,n  \Big\}.$$
If $x \in B$ and $-\frac{1}{4n}\leq t\leq \frac{1}{4n}$, then we have
$$ \bigg|\sum_{k=1}^n x_k\xi_k  -2Kt\xi_1 -t|\xi|^2 \bigg| \leq | (x_1-2Kt)\xi_1 | + | \sum_{k=2}^n x_k \xi_k | + |t||\xi|^2\leq \frac{1}{2}$$
by the support condition of $\phi$, and thus
\begin{align*}
\big||\nabla|^{-s} e^{it\Delta} f(x)\big|
&\gtrsim\cos(1/2) \int_{\mathbb{R}^n} \big((\xi_1+K)^2+\sum_{k=2}^{n} \xi_k^2\big)^{-\frac{s}{2}} \prod_{k=1}^n \phi(\xi_k) d\xi\\
&\ge \cos(1/2) \left( \frac{K^2}{2} \right)^{-\frac{s}{2}} \int_{\mathbb{R}^n} \prod_{k=1}^n \phi(\xi_k) d\xi\\
&\gtrsim K^{-s}
\end{align*}
if $K\ge 4$. This is because
$$(\xi_1+K)^2+\sum_{k=2}^{n} \xi_k^2 = K^2+2K\xi_1+|\xi|^2 \ge K^2-2K \ge \frac{K^2}{2} $$
under $-1\leq \xi_k \leq 1$ for all $k$.

By the change of variable $x_1\rightarrow x_1 +2Kt$, we therefore get
\begin{align*}
\big\||\nabla|^{-s} e^{it\Delta} f\big\|_{L_t^q L_x^r (|x|^{-r\gamma})}
&\gtrsim K^{-s} \biggl( \int_{-\frac{1}{4n}} ^{\frac{1}{4n}} \biggl( \int_B |x|^{-r\gamma} dx \biggl)^{\frac{q}{r}} dt\biggl)^{\frac{1}{q}}\nonumber \\
&\gtrsim K^{-s} \biggl( \int_{-\frac{1}{4n}} ^{\frac{1}{4n}} \biggl( \int_{|x|\leq \frac{1}{4n}} \big( ( x_1+2Kt)^2 +\sum_{k=2}^n x_k^2 \big)^{-\frac{r \gamma}{2}} dx\biggl)^{\frac{q}{r}} dt\biggl)^{\frac{1}{q}}.
\end{align*}
Note here that
$$
 (x_1+2Kt)^2 +\sum_{k=2}^n x_k^2 \leq   |x|^2 + 4 K|t| |x_1|+4K^2t^2
 \lesssim K^2
$$
if $K$ is sufficiently large.
Since $r\gamma >0$, it follows now that
$$
\big\||\nabla|^{-s} e^{it\Delta} f\big\|_{L_t^q L_x^r (|x|^{-r\gamma})}
\gtrsim K^{-s-\gamma}
$$
for all sufficiently large $K$.
Consequently, the estimate \eqref{1} leads us to $K^{-(s+\gamma)} \lesssim 1$ for all sufficiently large $K$.
But this is not possible for the case $s+\gamma<0$
which is equivalent to $2/q>n(1/2-1/r)+2\gamma$ by the scaling condition.

\section{Well-posedness}
In this section we first obtain some weighted estimates for the nonlinearity of \eqref{HE} using the same spaces as those involved in the weighted Strichartz estimates.
These nonlinear estimates will then play a key role when 
proving the well-posedness results via the contraction mapping principle.

\subsection{Nonlinear estimates}\label{sec4}
Before stating the nonlinear estimates, we introduce some notations.
For $\gamma>0$ and $-\frac12<s<\frac12$, we set 
$$ A_{\gamma,s}=\{(q,r):(q,r) \,\, \text{is}\,\, (\gamma,s)\text{-Schr\"odinger admissible}\},$$
and then define the weighted norm
$$\|u\|_{S_{\gamma,s}(I)}:=\sup_{(q,r)\in A_{\gamma,s}}\||x|^{-\gamma}u\|_{L_t^q(I;L_x^r)}$$
and its dual weighted norm
$$\|u\|_{S'_{\tilde \gamma,\tilde s}(I)}:=\inf_{(\tilde q,\tilde r)\in A_{\tilde \gamma,\tilde s},\tilde q>2}\||x|^{-\tilde \gamma}u\|_{L_t^{\tilde q'}(I;L_x^{\tilde r'})}$$
for any interval $I \subset \mathbb{R}.$

\subsubsection{The mass-critical case $s=0$}
First we obtain the nonlinear estimates for the special case $s=0$, the mass-critical case.
\begin{prop}
Let $n\ge2$. Assume that 
\begin{equation*}
n-2<\alpha<n \quad \text{and} \quad 0<b\leq \frac{\alpha-n}{2}+1.
\end{equation*}
If $p=1+\frac{2-2b+\alpha}{n}$ and 
\begin{equation}\label{as-21}
	0<\gamma=\tilde \gamma < \min\Big\{\frac{(p-1)s+1}{p}-\frac{(p-2)(2p-1)n}{4p^2}\Big\},
\end{equation}
then we have
\begin{equation}\label{non}
\||x|^{-b}|u|^{p-2}v(I_\alpha\ast|\cdot|^{-b}|u|^{p-1}|w|)\|_{S'_{\tilde \gamma,0}(I)} \leq C\|u\|^{2p-3}_{S_{\gamma,0}(I)} \|v\|_{S_{\gamma,0}(I)} \|w\|_{S_{\gamma,0}(I)}.
\end{equation}
\end{prop}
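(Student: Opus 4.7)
The plan is to combine Hölder's inequality with the weighted Hardy--Littlewood--Sobolev (Stein--Weiss) inequality to handle the Riesz convolution, exploiting the freedom in the sup-norm $S_{\gamma,0}$ and inf-norm $S'_{\tilde\gamma,0}$ to pick convenient admissible pairs on each side.

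First I would absorb the weights into the variables by setting $U=|x|^{-\gamma}u$, $V=|x|^{-\gamma}v$, $W=|x|^{-\gamma}w$, so that the weighted nonlinearity rewrites as
\[
|x|^{-\tilde\gamma}\bigl(|x|^{-b}|u|^{p-2}v\cdot I_\alpha\ast|\cdot|^{-b}|u|^{p-1}|w|\bigr)=|x|^{-\mu_1}U^{p-2}V\cdot I_\alpha\bigl(|x|^{-\mu_2}U^{p-1}W\bigr),
\]
where $\mu_1=b+\tilde\gamma-\gamma(p-1)$ and $\mu_2=b-\gamma p$ collect the $|x|^{-b}$ singularities together with the $|x|^\gamma$ factors pulled out of $|u|^{p-2}$ and $|u|^{p-1}$.

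Next I would pass to the bilinear form by duality in $x$ and apply the bilinear Stein--Weiss inequality
\[
\iint\frac{|x|^{-\mu_1}|y|^{-\mu_2}f(x)g(y)}{|x-y|^{n-\alpha}}\,dx\,dy\;\lesssim\;\|f\|_{L^{p_1}}\|g\|_{L^{p_2}}
\]
with $f=U^{p-2}V\phi$ (where $\phi\in L^{\tilde r}_x$ is the dual test function) and $g=U^{p-1}W$. Ordinary spatial Hölder factors $\|f\|_{L^{p_1}}\le\|U\|_{L^{r_1}}^{p-2}\|V\|_{L^{r_1}}\|\phi\|_{L^{\tilde r}}$ with $\tfrac{1}{p_1}=\tfrac{p-1}{r_1}+\tfrac{1}{\tilde r}$, and similarly $\|g\|_{L^{p_2}}\le\|U\|_{L^{r_2}}^{p-1}\|W\|_{L^{r_2}}$ with $\tfrac{1}{p_2}=\tfrac{p}{r_2}$. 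A Hölder in time with $\tfrac{1}{\tilde q'}=\tfrac{p-1}{q_1}+\tfrac{p}{q_2}$ then upgrades the pointwise-in-$t$ estimate to the required spacetime bound, and choosing $(q_1,r_1),(q_2,r_2)\in A_{\gamma,0}$ and $(\tilde q,\tilde r)\in A_{\tilde\gamma,0}$ with $\tilde q>2$, the sup/inf definitions of $S_{\gamma,0}$ and $S'_{\tilde\gamma,0}$ yield the desired estimate.

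The main obstacle is the simultaneous index-matching. The Stein--Weiss scaling $\tfrac{1}{p_1}+\tfrac{1}{p_2}=1+\tfrac{\alpha-\mu_1-\mu_2}{n}$, the sign condition $\mu_1+\mu_2\ge 0$, and the strict upper bounds $\mu_i<n/p_i'$ must all mesh with the two Hölder identities and the three $(\gamma,0)$-admissibility conditions. Under the hypotheses, the non-negativity of $\mu_1,\mu_2$ (when $\tilde\gamma=\gamma$) is secured by $0<b\le\tfrac{\alpha-n}{2}+1$ together with $\gamma\le b/p$, while the strict inequalities $\mu_i<n/p_i'$ are precisely what forces $\gamma$ to lie below the threshold $\tfrac{(p-1)s+1}{p}-\tfrac{(p-2)(2p-1)n}{4p^2}$ of \eqref{as-21}. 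Quantitatively verifying that all these constraints can be met by legitimate admissible pairs is the technical heart of the proof.
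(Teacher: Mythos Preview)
Your overall strategy---H\"older in space and time combined with a Hardy--Littlewood--Sobolev-type bound on the Riesz convolution---is the same as the paper's. The difference lies in how the singular weights are handled. The paper fixes $\gamma=b/p$ so that, in your notation, $\mu_2=b-p\gamma=0$; moreover, with the sign convention actually used in the dual norm (the $S'_{\tilde\gamma,0}$-norm carries weight $|x|^{+\tilde\gamma}$, as one sees from the inhomogeneous Strichartz estimate, so that $\mu_1=b-\tilde\gamma-\gamma(p-1)$ rather than your $b+\tilde\gamma-\gamma(p-1)$), one also gets $\mu_1=b-p\gamma=0$ when $\tilde\gamma=\gamma$. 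This collapses your weighted Stein--Weiss step to the plain unweighted HLS inequality, and the paper then works with a \emph{single} pair $(q,r)\in A_{\gamma,0}$ together with the rigid relations $\tfrac{1}{\tilde q'}=\tfrac{2p-1}{q}$, $\tfrac{1}{\tilde r'}=\tfrac{2p-1}{r}-\tfrac{\alpha}{n}$. The bulk of the paper's argument is precisely the exponent bookkeeping you defer as ``the technical heart'': verifying that these relations are compatible with $(q,r)\in A_{\gamma,0}$ and $(\tilde q,\tilde r)\in A_{\tilde\gamma,0}$, $\tilde q>2$, is what produces the threshold in \eqref{as-21} and the constraints on $\alpha,b$.

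Your Stein--Weiss route is in principle more flexible (it could accommodate $\gamma\neq b/p$), but as written it has real gaps. You invoke $\gamma\le b/p$ to secure $\mu_2\ge 0$, yet this is not among the hypotheses---it must either be derived or, as the paper does, imposed as an equality. More importantly, you have not carried out any of the index-matching; the assertion that the strict Stein--Weiss bounds $\mu_i<n/p_i'$ ``precisely force'' the threshold in \eqref{as-21} is unsubstantiated, and that computation is the entire content of the proof.
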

\begin{proof}
It is sufficient to show that there exist $(q,r)\in A_{\gamma,0}$ and $(\tilde q , \tilde r) \in A_{\tilde \gamma,0}$ with $\tilde q>2$ for which 
\begin{align}
\nonumber
\||x|^{-b}|u|^{p-2}v&(I_\alpha\ast|\cdot|^{-b}|u|^{p-1}|w|)\|_{L_t^{\tilde{q}'}(I ; L_{x}^{\tilde{r}'}(|x|^{\tilde{r}' \tilde{\gamma}}))} \\ 
\label{non'}
&\leq C\|u\|^{2p-3}_{L_t^{q}(I; L_{x}^{r}(|x|^{-r\gamma}))} \|v\|_{L_t^{q}(I; L_{x}^{r}(|x|^{-r\gamma}))} \|w\|_{L_t^{q}(I; L_{x}^{r}(|x|^{-r\gamma}))}
\end{align}
holds for $\alpha,b,p,\gamma,\tilde \gamma$ given as in the lemma.

For $\gamma, \tilde \gamma>0$, we first consider $(\gamma,0)$-Schr\"odinger admissible pair $(q,r)$ and $ (\tilde\gamma,0)$-Schr\"odinger admissible pair $(\tilde q,\tilde r)$ as
\begin{equation}\label{2c1}
0\leq \frac{1}{q} \leq \frac{1}{2}, \quad \frac{\gamma}{n}< \frac{1}{r}\leq \frac{1}{2},\quad
\frac{2}{q}<n(\frac{1}{2}-\frac{1}{r})+2\gamma,\quad \frac{2}{q}=n(\frac{1}{2}-\frac{1}{r})+\gamma,
\end{equation}
\begin{equation}\label{2c2}
0\leq \frac{1}{\tilde{q}} < \frac{1}{2}, \quad \frac{\tilde{\gamma}}{n}< \frac{1}{\tilde{r}}\leq \frac{1}{2}, \quad
\frac{2}{\tilde{q}}<n(\frac{1}{2}-\frac{1}{\tilde{r}})+2\tilde{\gamma}, \quad \frac{2}{\tilde{q}}=n(\frac{1}{2}-\frac{1}{\tilde{r}})+\tilde{\gamma}.
\end{equation}
To control the left-hand side of \eqref{non'}, we use the Hardy-Littlewood-Sobolev type inequality, Lemma \ref{HLS}.
By making use of the lemma and H\"older's inequality, we obtain 
\begin{align*}
	\big\||x|^{-b+\tilde \gamma }|u|^{p-2}v&(I_\alpha\ast|\cdot|^{-b}|u|^{p-1}|w|)\big\|_{L_t^{\tilde{q}'}(I ; L_{x}^{\tilde{r}'})} \\
	&\leq C \||x|^{-(p-1)\gamma}|u|^{p-2}v\|_{L_t^\frac{q}{p-1}(I;L_x^\frac{r}{p-1})}\||x|^{-p\gamma}|u|^{p-1}|w|\|_{L_t^\frac qp(I;L_x^\frac rp)}\\
	&\leq C\||x|^{-\gamma}u\|^{2p-3}_{L_t^q(I;L_x^r)}\||x|^{-\gamma}v\|_{L_t^q(I;L_x^r)}\||x|^{-\gamma}w\|_{L_t^q(I;L_x^r)}
\end{align*}
with
\begin{equation}\label{2c3}
\frac{1}{\tilde q'}=\frac{2p-1}{q},\quad \frac{1}{\tilde r'}=\frac{2p-1}{r}-\frac{\alpha}{n},  \quad \tilde \gamma = \gamma, 
\end{equation}
\begin{equation}\label{2c4}
0<\frac{1}{r}<\frac{1}{p}, \quad b=p\gamma.
\end{equation}
 
It remains to check the assumptions under which \eqref{non'} holds.
Combining the last two conditions of \eqref{2c2} implies $\tilde{\gamma}>0.$
Substituting \eqref{2c3} into \eqref{2c2} with $\tilde\gamma>0$ also implies
\begin{equation}\label{2c5}
\frac1{2(2p-1)}<\frac{1}{q}\leq\frac1{2p-1}, \quad \frac{n+2\alpha}{2n(2p-1)} \leq \frac{1}{r} <\frac{n+\alpha-\gamma}{n(2p-1)}, \quad \gamma>0,
\end{equation}
\begin{equation}\label{2c6}
\frac{2}{q}=\frac{n+4}{2(2p-1)}-\frac{n}{r}+\frac{\alpha-\gamma}{2p-1}.
\end{equation}
Note that \eqref{2c6} is exactly same as the last condition of \eqref{2c1} when $p=1+\frac{2-2b+\alpha}{n}$ with $b=p\gamma$, by which the second one of \eqref{2c5} becomes 
\begin{equation}\label{2c7}
\frac{4-n}{4(2p-1)}< \frac{1}{q} \leq \frac{2-\gamma}{2(2p-1)}.
\end{equation}
The lower bound of $1/q$ in \eqref{2c7} and the upper one of $1/q$ in \eqref{2c5} can be eliminated by the lower one of $1/q$ in \eqref{2c5} and the upper one of $1/q$ in \eqref{2c7} by using $n\ge2$ and $\gamma>0$, respectively.
From the lower bound of \eqref{2c5} and the upper bound of \eqref{2c7}, we get 
\begin{equation}\label{2c8}
\frac1{2(2p-1)}< \frac{1}{q}\leq \frac{2-\gamma}{2(2p-1)}.
\end{equation}

On the other hand, substituting the last condition of \eqref{2c1} into the second and third ones of \eqref{2c1} and the first one of \eqref{2c4}, the first three conditions of \eqref{2c1} and the first one of \eqref{2c4} are rewritten as 
\begin{equation}\label{2c9}
0\leq\frac1q\leq\frac12, \quad \frac{n}2\Big(\frac{1}{2}-\frac{1}{p}\Big)+\frac{\gamma}2<\frac1q<\frac{n}{4}, \quad \gamma>0
\end{equation}
in which the upper bounds of $1/q$ can be eliminated by the upper bound of $1/q$ in \eqref{2c5} since $p\ge2$ and $n\ge2$.
Combining \eqref{2c8} and the first two conditions in \eqref{2c9}, we then get 
\begin{equation}\label{2c10}
\max\Big\{\frac{1}{2(2p-1)},\frac{n}{2}\Big(\frac{1}{2}-\frac{1}{p}\Big)+\frac{\gamma}{2} \Big\}< \frac{1}{q}< \frac{2-\gamma}{2(2p-1)}.
\end{equation}

To derive the assumption \eqref{as-21}, we make the lower bound of $1/q$ less than the upper one of $1/q$ in \eqref{2c10}.
As a result, 
\begin{equation}\label{2c11}
\gamma<1 \quad \textrm{and} \quad \gamma <\frac{1}{p}-\frac{(p-2)(2p-1)n}{4p^2}.
\end{equation}
Indeed, starting from the lower bound $\frac{1}{2(2p-1)}$ of $1/q$, we see the first condition of \eqref{2c11}. From the lower bound $\frac{n}{2}(\frac{1}{2}-\frac{1}{p})+\frac{\gamma}{2}$ of $1/q$, we also see the last  condition in \eqref{2c11}. But here the second upper bound of $\gamma$ in \eqref{2c11} is less than the first upper one in \eqref{2c11}. 
By combining \eqref{2c11} and $\gamma>0$, we finally arrive at 
\begin{equation}\label{cb}
	0<\gamma<\frac{1}{p}-\frac{(p-2)(2p-1)n}{4p^2},
\end{equation}
which implies the assumption \eqref{as-2} since the upper bound of $\gamma$ in \eqref{cb} is the minimum value when $s=0$.

Now we derive the assumptions in \eqref{as-1}.
We first insert $s=\frac{n}{2}-\frac{2-2b+\alpha}{2(p-1)}=0$ with $b=p\gamma$ into \eqref{cb}.
Indeed, the equality is rewritten as 
\begin{equation*}
	\gamma=\frac{2+\alpha-(p-1)n}{2p},
\end{equation*}
and then we get
\begin{equation}\label{2c12}
(p-1)n-2<\alpha, \quad \alpha<\frac{(3p-2)n}{2p}.
\end{equation}
The last condition here is redundant from $0<\alpha<n$.
We write the first condition of \eqref{2c12} with respect $p$ as 
\begin{equation*}
	p<\frac{\alpha+2}{n}+1.
\end{equation*}
Eliminating $p$ in this condition with $p\ge2$, and combining with $0<\alpha<n$, we arrive at $n-2<\alpha<n$ which implies the first assumption in \eqref{as-1}.
The only assumption left is the second one in \eqref{as-1}.
Since $(p-1)n-2>0$, eliminating $\alpha$ in the first condition of \eqref{2c12} with $0<\alpha<n$, we see
\begin{equation*}
	2\leq p<2+\frac{2}{n}
\end{equation*}
by combining with $p\ge2$.
We substitute $p=1+\frac{2-2b+\alpha}{n}$ into this to deduce
\begin{equation*}
\frac{\alpha-n}{2}< b \leq \frac{\alpha-n}{2}+1
\end{equation*}
which implies the second assumption in \eqref{as-1} from the fact that $-2<\alpha-n<0$.
\end{proof}

\subsubsection{The $H^s$-critical case}
Next we treat the $H^s$-critical case, $s>0$.
\begin{prop}
Let $n\ge2$ and $0<s<1/2.$ Assume that 
\begin{equation*}
	n-2<\alpha<n\quad\text{and}\quad \max\Big\{0, \frac{\alpha-n}{2}+\frac{(n+2)s}{n}\Big\}<b\leq \frac{\alpha-n}{2}+s+1.
\end{equation*}
	If $p=1+\frac{2-2b+\alpha}{n-2s}$ and 
\begin{equation}\label{as-3}
s<\gamma=\tilde \gamma < \min\Big\{1-s,\frac{(p-1)s+1}{p}-\frac{(p-2)(2p-1)n}{4p^2},\frac{n}{p}\Big\},
\end{equation}
	then we have
\begin{equation*}
\||x|^{-b}|u|^{p-2}v(I_\alpha\ast|\cdot|^{-b}|u|^{p-1}|w|)\|_{S'_{\tilde \gamma,-s}(I)} \leq C\|u\|^{2p-3}_{S_{\gamma,s}(I)} \|v\|_{S_{\gamma,s}(I)} \|w\|_{S_{\gamma,s}(I)}
\end{equation*}
and
\begin{equation*}
\Big\||\nabla|^{-s}\big(|x|^{-b}|u|^{p-2}v(I_\alpha\ast|\cdot|^{-b}|u|^{p-1}|w|)\big)\Big\|_{S'_{\tilde \gamma,-s}(I)}\leq C\|u\|^{2p-3}_{S_{\gamma,s}(I)} \|v\|_{S_{\gamma,s}(I)} \|w\|_{S_{\gamma,s}(I)}.
\end{equation*}
\end{prop}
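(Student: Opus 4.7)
The plan is to parallel the proof of the preceding (mass-critical) proposition. I would fix $(q,r)\in A_{\gamma,s}$ and $(\tilde q,\tilde r)\in A_{\tilde\gamma,-s}$ with $\tilde q>2$, and set the weight relations $\tilde\gamma=\gamma$ and $b=p\gamma$; the two estimates are then obtained from Lemma \ref{HLS} together with H\"older pointwise in space, followed by H\"older in time, essentially as in the $s=0$ case, with Stein--Weiss inserted to handle the $|\nabla|^{-s}$ in the second estimate.

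For the first estimate, I would apply Lemma \ref{HLS} pointwise in $t$ to the split $G_1=|x|^{-(p-1)\gamma}|u|^{p-2}v$ and $G_2=|x|^{-p\gamma}|u|^{p-1}|w|$, chosen so that the outer weight $|x|^{-b+\tilde\gamma}=|x|^{-(p-1)\gamma}$ and the inner weight $|x|^{-b}=|x|^{-p\gamma}$ match the decomposition; this yields a product estimate with $1/\tilde r'=(2p-1)/r-\alpha/n$. H\"older within each factor to extract $\||x|^{-\gamma}u\|_{L^r}$, $\||x|^{-\gamma}v\|_{L^r}$, $\||x|^{-\gamma}w\|_{L^r}$, followed by H\"older in time giving $1/\tilde q'=(2p-1)/q$, closes the bound. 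The check that the resulting $(\tilde q,\tilde r)$ is $(\tilde\gamma,-s)$-admissible reduces, via the $(\gamma,s)$- and $(\tilde\gamma,-s)$-scaling identities, to the algebraic consequence $(p-1)n+2p\gamma=2+\alpha+2(p-1)s$ of the critical relation $p=1+(2-2b+\alpha)/(n-2s)$ under $b=p\gamma$.

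For the second estimate, the Riesz smoothing operator $|\nabla|^{-s}$ does not commute with $|x|^{\tilde\gamma}$, so I would invoke the Stein--Weiss weighted Hardy--Littlewood--Sobolev inequality to pass the outer weight through $|\nabla|^{-s}$, reducing $\bigl\||x|^{\tilde\gamma}|\nabla|^{-s}G\bigr\|_{L^{\tilde r'}_x}$ to $\bigl\||x|^{\gamma}G\bigr\|_{L^{\ell}_x}$, where $\ell$ and the inner weight exponent $\gamma$ are forced by Stein--Weiss and by weight matching (since $|x|^\gamma\cdot|x|^{-b}=|x|^{-(p-1)\gamma}$ again matches the HLS split). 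The remainder of the argument copies the first estimate with $\tilde r'$ replaced by $\ell$, producing an adjusted admissible pair $(\tilde q,\tilde r)\in A_{\tilde\gamma,-s}$ with $\tilde q>2$ different from that of the first estimate; one chooses this adjusted pair when taking the infimum defining $S'_{\tilde\gamma,-s}$.

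The final stage is to verify that the admissibility conditions on $(q,r)$ and $(\tilde q,\tilde r)$, the HLS exponents, and the weight identity $b=p\gamma$ are simultaneously solvable precisely under \eqref{as-1} and \eqref{as-3}. Substituting $b=p\gamma$ and the critical relation into \eqref{gsad} produces an interval for $1/q$ whose non-emptiness, by the same max--min comparison as in the mass-critical proof, yields the three upper bounds on $\gamma$ in \eqref{as-3}: $1-s$ from the second condition of \eqref{gsad} on $(\tilde q,\tilde r)$, $((p-1)s+1)/p-(p-2)(2p-1)n/(4p^2)$ from overlap of the lower and upper bounds on $1/q$, and $n/p$ from positivity of $1/r$; while \eqref{as-1} follows from $p\ge2$, $0<\alpha<n$, and $b=p\gamma>0$. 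I expect the main obstacle to be the second estimate: the Stein--Weiss step perturbs the Lebesgue scaling, and verifying that the adjusted $(\tilde q,\tilde r)$ remains in $A_{\tilde\gamma,-s}$ with $\tilde q>2$ while preserving the time-H\"older identity $1/\tilde q'=(2p-1)/q$ is the delicate point, with the middle upper bound on $\gamma$ in \eqref{as-3} being the tightest constraint to balance.
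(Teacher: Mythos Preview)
Your plan is correct and coincides with the paper's proof. The paper proves the first estimate exactly as you describe, via Lemma~\ref{HLS} and H\"older with the relations $1/\tilde q'=(2p-1)/q$, $1/\tilde r'=(2p-1)/r-\alpha/n$, $\tilde\gamma=\gamma$, $b=p\gamma$, and then reduces the second estimate to the first by applying the Stein--Weiss inequality (Lemma~\ref{le3}) to pass $|x|^{\tilde\gamma}$ through $|\nabla|^{-s}$, landing on $\||x|^{\gamma}G\|_{L^{\tilde r_1'}}$ with the same $\tilde r_1'$ as before; the remaining work is the exponent bookkeeping you outline. One small clarification: in the paper the Stein--Weiss step is set up with the outer weight $\tilde\gamma_2$ treated as a free parameter satisfying $\tilde\gamma_2\le\gamma$, and the admissibility of $(\tilde q_2,\tilde r_2)$ is checked by showing the resulting interval for $1/\tilde r_2$ is nonempty; your choice $\tilde\gamma_2=\gamma$ is the endpoint of that interval and works directly, so your version is in fact slightly more streamlined. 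Your attribution of the bound $\gamma<n/p$ to ``positivity of $1/r$'' is a bit off---it actually comes from combining the HLS requirement $1/r<1/p$ with the admissibility constraint $\gamma/n<1/r$---but this does not affect the argument.
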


To prove the proposition,
it is sufficient to show that there exist $(q,r)\in A_{\gamma,s}$ and $(\tilde q_i , \tilde r_i) \in A_{\tilde \gamma_i,-s}$ with $\tilde q_i>2$, $i=1,2$ for which
\begin{align}
	\nonumber
	\||x|^{-b}|u|^{p-2}v&(I_\alpha\ast|\cdot|^{-b}|u|^{p-1}|w|)\|_{L_t^{\tilde{q}_1'}(I ; L_{x}^{\tilde r_1'}(|x|^{\tilde r_1' \tilde\gamma_1}))} \\ 
	\label{nonH}
	&\leq C\|u\|^{2p-3}_{L_t^{q}(I; L_{x}^{r}(|x|^{-r\gamma}))} \|v\|_{L_t^{q}(I; L_{x}^{r}(|x|^{-r\gamma}))} \|w\|_{L_t^{q}(I; L_{x}^{r}(|x|^{-r\gamma}))}
\end{align}
and 
\begin{align}
	\nonumber
	\Big\||\nabla|^{-s}\Big(|x|^{-b}&|u|^{p-2}v(I_\alpha\ast|\cdot|^{-b}|u|^{p-1}|w|)\Big)\Big\|_{L_t^{\tilde q_2'}(I ; L_{x}^{\tilde r_2'}(|x|^{\tilde r_2' \tilde\gamma_2}))} \\ 
	\label{nonH2}
	&\quad\quad\,\leq C\|u\|^{2p-3}_{L_t^{q}(I; L_{x}^{r}(|x|^{-r\gamma}))} \|v\|_{L_t^{q}(I; L_{x}^{r}(|x|^{-r\gamma}))} \|w\|_{L_t^{q}(I; L_{x}^{r}(|x|^{-r\gamma}))}
\end{align}
hold for $\alpha,b,p,\gamma,\tilde \gamma$ given as in the lemma.

Let $0<s<1/2.$ For $\gamma, \tilde \gamma_i>0$, we first consider $(\gamma,s)$-Schr\"odinger admissible pair $(q,r)$ and $ (\tilde\gamma_i,-s)$-Schr\"odinger admissible pairs $(\tilde q_i,\tilde r_i)$ as
\begin{equation}\label{c1}
0\leq \frac{1}{q} \leq \frac{1}{2}, \quad \frac{\gamma}{n}< \frac{1}{r}\leq \frac{1}{2},\quad
\frac{2}{q}<n(\frac{1}{2}-\frac{1}{r})+2\gamma,\quad \frac{2}{q}=n(\frac{1}{2}-\frac{1}{r})+\gamma-s,
\end{equation}
\begin{equation}\label{c2}
0\leq \frac{1}{\tilde q_i} < \frac{1}{2}, \,\,\, \frac{\tilde\gamma_i}{n}< \frac{1}{\tilde r_i}\leq \frac{1}{2}, \,\,\,
\frac{2}{\tilde q_i}<n(\frac{1}{2}-\frac{1}{\tilde r_i})+2\tilde\gamma_i, \,\,\, \frac{2}{\tilde q_i}=n(\frac{1}{2}-\frac{1}{\tilde r_i})+\tilde{\gamma_i}+s.
\end{equation}

\begin{proof}[Proof of \eqref{nonH}]
By making use of Lemma \ref{HLS} and H\"older's inequality we obtain 
\begin{align*}
	\big\||x|^{-b+\tilde \gamma_1 }|u|^{p-2}v&(I_\alpha\ast|\cdot|^{-b}|u|^{p-1}|w|)\big\|_{L_t^{\tilde q_1'}(I ; L_{x}^{\tilde r_1'})} \\
	&\leq C \||x|^{-(p-1)\gamma}|u|^{p-2}v\|_{L_t^\frac{q}{p-1}(I;L_x^\frac{r}{p-1})}\||x|^{-p\gamma}|u|^{p-1}|w|\|_{L_t^\frac qp(I;L_x^\frac rp)}\\
	&\leq C\||x|^{-\gamma}u\|^{2p-3}_{L_t^q(I;L_x^r)}\||x|^{-\gamma}v\|_{L_t^q(I;L_x^r)}\||x|^{-\gamma}w\|_{L_t^q(I;L_x^r)}
\end{align*}
with
\begin{equation}\label{c3}
\frac{1}{\tilde q_1'}=\frac{2p-1}{q},\quad \frac{1}{\tilde r_1'}=\frac{2p-1}{r}-\frac{\alpha}{n},  \quad \tilde \gamma_1 = \gamma, 
\end{equation}
\begin{equation}\label{c4}
0<\frac{1}{r}<\frac{1}{p}, \quad b=p\gamma.
\end{equation}
 
It remains to check the assumptions under which \eqref{nonH} holds.
Combining the last two conditions of \eqref{c2} implies $\tilde{\gamma}_1>s$
which can replace $\tilde{\gamma}_1>0$ since $0<s<1/2$.
Substituting \eqref{c3} into \eqref{c2} implies
\begin{equation}\label{c5}
\frac1{2(2p-1)}<\frac{1}{q}\leq\frac1{2p-1}, \quad \frac{n+2\alpha}{2n(2p-1)} \leq \frac{1}{r} <\frac{n+\alpha-\gamma}{n(2p-1)}, \quad s<\gamma,
\end{equation}
\begin{equation}\label{c6}
\frac{2}{q}=\frac{n+4}{2(2p-1)}-\frac{n}{r}+\frac{\alpha-\gamma-s}{2p-1}.
\end{equation}
Note that \eqref{c6} is exactly same as the last condition of \eqref{c1} when $p=1+\frac{2-2b+\alpha}{n-2s}$ with $b=p\gamma$, by which the second one of \eqref{c5} becomes 
\begin{equation}\label{c7}
\frac{4-n-2s}{4(2p-1)}< \frac{1}{q} \leq \frac{2-\gamma-s}{2(2p-1)}.
\end{equation}
The lower bound of $1/q$ here can be eliminated using the lower one of $1/q$ in \eqref{c5} with $2-n<2s$, and the upper bound of $1/q$ in \eqref{c5} can be also eliminated by the upper one of $1/q$ here using $\gamma>0$ and $s>0$.
From the first one of \eqref{c5} and the upper bound of \eqref{c7}, we get 
\begin{equation}\label{c8}
\frac1{2(2p-1)}<\frac{1}{q}\leq \frac{2-\gamma-s}{2(2p-1)}.
\end{equation}

On the other hand, substituting the last condition of \eqref{c1} into the second and third ones of \eqref{c1} and the first one of \eqref{c4}, the first three conditions of \eqref{c1} and the first one of \eqref{c4} are rewritten as 
\begin{equation}\label{c9}
0\leq\frac1q\leq\frac12, \quad \frac{n}2\Big(\frac{1}{2}-\frac{1}{p}\Big)+\frac{\gamma-s}2<\frac1q<\frac{n-2s}{4}, \quad -s<\gamma
\end{equation}
in which the last condition is redundant by the last one of \eqref{c5} due to $0<s<1/2$, and the first upper bound of $1/q$ can be eliminated by the second one of $1/q$ by using $2-n<2s$.
Combining \eqref{c8} and the first two conditions in \eqref{c9}, we then get 
\begin{equation}\label{c10}
\max\Big\{\frac{1}{2(2p-1)},\frac{n}{2}\Big(\frac{1}{2}-\frac{1}{p}\Big)+\frac{\gamma-s}{2} \Big\} < \frac{1}{q} < \min\Big\{\frac{2-\gamma-s}{2(2p-1)},\frac{n-2s}{4}\Big\}.
\end{equation}

To derive the assumption \eqref{as-3}, we make the lower bound of $1/q$ less than the upper one of $1/q$ in \eqref{c10}.
As a result, 
\begin{equation}\label{c11}
\gamma<1-s, \quad s<\frac{n}{2}-\frac{1}{2p-1},\quad\gamma <\frac{(p-1)s+1}{p}-\frac{(p-2)(2p-1)n}{4p^2},\quad\gamma<\frac{n}{p}.
\end{equation}
Indeed, starting from the first lower bound of $1/q$, we see the first two conditions of \eqref{c11} in which the second condition is redundant due to $\frac{n}2-\frac{1}{2p-1}\ge\frac12$ by using $p\ge2$ and $n\ge2$.
From the second lower bound of $1/q$, we also see the last two conditions in \eqref{c11}.  
By combining \eqref{c11} and $s<\gamma$ which follows from the last one in \eqref{c5}, we arrive at \eqref{as-2}
\begin{equation}\label{c11-1}
	s<\gamma<\min\Big\{1-s, \frac{(p-1)s+1}{p}-\frac{(p-2)(2p-1)n}{4p^2},\frac{n}{p}\Big\}
\end{equation}
as desired.

Now we  derive the assumptions in \eqref{as-1}.
Inserting $s=\frac{n}{2}-\frac{2-2b+\alpha}{2(p-1)}$ and $\gamma=b/p$ into \eqref{c11-1}, we see 
\begin{equation}\label{c12-2}
b<\frac{(\alpha+2) p-p(p-1)n}{2}, \,\, b<\frac{\alpha p+2p^2-p(p-1)n}{2(2p-1)}, \,\, \alpha<\frac{(3p-2)n}{2p}, \,\, b<n
\end{equation} 
in which the third condition is redundant from $0<\alpha<n$.
On the other hand, inserting $s=\frac{n}{2}-\frac{2-2b+\alpha}{2(p-1)}$ into $0<s<\frac12$ implies 
\begin{equation}\label{c12-1}
	\frac{\alpha+2-(p-1)n}2<b<\frac{\alpha+p+1-(p-1)n}{2}.
\end{equation}
By making the lower bounds of $b$ less than the upper ones of $b$ in \eqref{c12-2} and \eqref{c12-1} together with $b>0$, we get 
\begin{equation}\label{c12}
(p-1)n-2<\alpha, \quad (p-1)n-2p<\alpha, \quad (p-1)n-p-1<\alpha,
\end{equation}
\begin{equation}\label{c13}
\alpha<(p-1)n+2(p-1), \quad \alpha<(p+1)n-2.
\end{equation}
Indeed, from the lower bound $0$ of $b$ we see \eqref{c12} in which the last two conditions can be eliminated by the first one. From the another lower bound $\frac{\alpha+2-(p-1)n}{2}$ of $b$, we also see the first condition of \eqref{c12}, and \eqref{c13} which is redundant by $0<\alpha<n$.

Eliminating $p$ in the first condition of \eqref{c12} with $p\ge2$, and combining $0<\alpha<n$, we see $n-2<\alpha<n$ which implies the first assumption in \eqref{as-1}.
Lastly, we derive the second assumption in \eqref{as-1} which is left. Since $(p-1)n-2>0$, eliminating $\alpha$ in the first condition of \eqref{c12} with $0<\alpha<n$ and combining $p\ge2$, we see  
\begin{equation}\label{c15}
2\leq p<2+\frac2n.
\end{equation}
Substituting $p=1+\frac{2-2b+\alpha}{n-2s}$ into \eqref{c15}, we see
\begin{equation*}
\frac{\alpha-n}{2}+\frac{(n+2)s}{n} < b \leq \frac{\alpha-n}{2}+s+1
\end{equation*}
which implies the second assumption in \eqref{as-1}.
\end{proof}

\begin{proof}[Proof of \eqref{nonH2}]
We have to obtain \eqref{nonH2} under conditions on $q,r$ and $\gamma$ for which \eqref{nonH} holds.
To handle the term $|\nabla|^{-s}$ here, we make use of the weighted version of the Sobolev embedding, Lemma \ref{le3}. 
Indeed, applying the lemma with 
\begin{equation*}
a=\gamma, \quad b=\tilde\gamma_2, \quad \frac1{\tilde r_1'}=\frac{2p-1}{r}-\frac{\alpha}{n}, \quad \frac{1}{\tilde q_2'}=\frac{2p-1}{q},
\end{equation*}
we have
\begin{align}
\nonumber
\Big\||x|^{\tilde\gamma_2}|\nabla|^{-s}\Big(|x|^{-b}|u|^{p-2}v&(I_\alpha\ast|\cdot|^{-b}|u|^{p-1}|w|)\Big)\Big\|_{L_t^{\tilde q_2'}(I ; L_{x}^{\tilde r_2'})} \\
\label{a} 
&\lesssim \||x|^{-b+\tilde\gamma_1}|u|^{p-2}v(I_\alpha\ast|\cdot|^{-b}|u|^{p-1}|w|)\|_{L_t^{\tilde{q}_1'}(I ; L_{x}^{\tilde r_1'})}
\end{align}
if 
\begin{equation}\label{Hc1}
0<\frac{1}{\tilde r_2'} \leq \frac{2p-1}{r}-\frac{\alpha}{n}<1,
\end{equation}
\begin{equation}\label{Hc2}
-\frac{n}{\tilde r_2'} < \tilde\gamma_2 \leq \gamma <\frac{n}{\tilde r_1}
\end{equation}
and
\begin{equation}\label{Hc3}
\gamma-\tilde\gamma_2-s=n+\alpha-\frac{(2p-1)n}{r}-\frac{n}{\tilde r_2}.
\end{equation}

Now we check that there exist the exponents $\tilde q_2, \tilde r_2, \tilde \gamma_2$ satisfying \eqref{Hc1}, \eqref{Hc2}, \eqref{Hc3} under the conditions for which \eqref{nonH} holds. Then, one can apply \eqref{nonH} to the right side of \eqref{a} in order to get \eqref{nonH2}.

Since $\tilde\gamma_2>0$, the first inequality in \eqref{Hc2} is redundant, and the third inequality in \eqref{Hc2} is also redundant from the second inequality in \eqref{c2}.
Hence \eqref{Hc2} is reduced to 
\begin{equation}\label{Hc4}
\tilde\gamma_2\leq\gamma.
\end{equation}
By using \eqref{Hc3} and the last equality in \eqref{c2}, the exponents $\tilde q_2$ and $\tilde \gamma_2$ in all the inequalities in \eqref{c2} for $i=2$, \eqref{Hc1} and \eqref{Hc4} can be removed as follows:
\begin{equation}\label{Hc5}
\frac{\gamma-s-n-\alpha}{n}+\frac{2p-1}{r}+\frac{1}{\tilde r_2} < \frac{1}{\tilde r_2} \leq \frac12,
\end{equation} 
\begin{equation}\label{Hc6}
\frac{-\gamma+s+n+\alpha}{n}-\frac{2p-1}{r}<\frac{1}{\tilde r_2},
\end{equation}
\begin{equation}\label{Hc7}
0<1-\frac{2p-1}{r}+\frac{\alpha}{n} \leq \frac{1}{\tilde r_2} <1,
\end{equation}
\begin{equation}\label{Hc8}
\frac{1}{\tilde r_2}\leq \frac{\alpha+s+n}n-\frac{2p-1}{r}.
\end{equation}
The first inequality in \eqref{Hc5} is equivalent to 
$$\frac{1}{r}<\frac{n+\alpha-\gamma+s}{(2p-1)n}$$
which is redundant from the upper bound of $1/r$ in \eqref{c5}.
Similarly, the first inequality in \eqref{Hc7} is also redundant by the second inequality in \eqref{c2}.
The last inequalities in \eqref{Hc5} and \eqref{Hc7} are eliminated by the second inequality in \eqref{c2}.
Hence \eqref{Hc7} is reduced to 
\begin{equation}\label{Hc9}
1-\frac{2p-1}{r}+\frac{\alpha}{n}\leq \frac{1}{\tilde r_2}.
\end{equation}
Now it remains to check that there exists $\tilde r_2$ satisfying \eqref{Hc6}, \eqref{Hc8} and \eqref{Hc9} under the conditions in Theorem \ref{Hloc}.
To do so, we make each lower bound of $1/\tilde r_2$ in \eqref{Hc6} and \eqref{Hc9} less than the upper one of $1/\tilde r_2$ in \eqref{Hc8} in turn. 
Indeed, from the lower bounds in \eqref{Hc6} and \eqref{Hc9}, we see $\gamma>0$ and $s>0$, respectively, which is already satisfied. 
\end{proof}

\subsubsection{The critical case below $L^2$}
Finally we consider the $H^s$-critical case, $s<0$.
\begin{prop}
Let $n\ge2$ and $-1/2<s<0.$ Assume that 
\begin{equation*}
n-2-2s<\alpha<n\quad\text{and}\quad 0<b\leq \frac{\alpha-n}{2}+s+1.
\end{equation*}
If $p=1+\frac{2-2b+\alpha}{n-2s}$ and
\begin{equation}\label{as-4}
-s<\gamma <\min\Big\{\frac{(p-1)s+1}{p}-\frac{(p-2)(2p-1)n}{4p^2}, s-\frac{n}2+\frac{n}{p}+\frac2{2p-1}, \frac{n}2\Big\}
\end{equation}
then we have
\begin{equation*}
\||x|^{-b}|u|^{p-2}v(I_\alpha\ast|\cdot|^{-b}|u|^{p-1}|w|)\|_{S'_{\tilde \gamma,-s}(I)} \leq C\|u\|^{2p-3}_{S_{\gamma,s}(I)} \|v\|_{S_{\gamma,s}(I)} \|w\|_{S_{\gamma,s}(I)}.
\end{equation*}
\end{prop}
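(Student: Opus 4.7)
The strategy mirrors the argument used for \eqref{nonH} in the preceding $H^s$-critical case, with the sign of $s$ flipped and the associated bookkeeping of redundant constraints redone. As before, it suffices to exhibit exponents $(q,r)\in A_{\gamma,s}$ and $(\tilde q,\tilde r)\in A_{\tilde\gamma,-s}$ with $\tilde q>2$ for which
$$
\big\||x|^{-b}|u|^{p-2}v\,(I_\alpha\ast|\cdot|^{-b}|u|^{p-1}|w|)\big\|_{L^{\tilde q'}_t(I;L^{\tilde r'}_x(|x|^{\tilde r'\tilde\gamma}))}
\lesssim \|u\|^{2p-3}_{L^q_t L^r_x(|x|^{-r\gamma})}\|v\|_{L^q_t L^r_x(|x|^{-r\gamma})}\|w\|_{L^q_t L^r_x(|x|^{-r\gamma})}
$$
holds, since $S_{\gamma,s}$ is a supremum norm on the right and $S'_{\tilde\gamma,-s}$ an infimum norm on the left. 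Applying Lemma \ref{HLS} together with H\"older's inequality in $x$ and then in $t$, exactly as in the preceding two subsections, forces the linking relations
$$
\tilde\gamma=\gamma,\quad b=p\gamma,\quad \tfrac{1}{\tilde q'}=\tfrac{2p-1}{q},\quad \tfrac{1}{\tilde r'}=\tfrac{2p-1}{r}-\tfrac{\alpha}{n},\quad 0<\tfrac{1}{r}<\tfrac{1}{p}.
$$

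Substituting these into the admissibility conditions for $A_{\gamma,s}$ and $A_{\tilde\gamma,-s}$, together with $\tilde q>2$ and the automatic $\tilde q',\tilde r'\in[1,\infty)$, produces a system of inequalities in $(1/q,1/r,\gamma)$. A direct computation shows that the two scaling identities coincide precisely when $p=1+(2-2b+\alpha)/(n-2s)$ with $b=p\gamma$, so the scaling constraint is automatic. The bound $\gamma>-s$ then falls out from the strict admissibility inequality $2/q<n(1/2-1/r)+2\gamma$, and $\gamma<n/2$ from $\gamma/n<1/\tilde r<1$. The bound $\gamma<\frac{(p-1)s+1}{p}-\frac{(p-2)(2p-1)n}{4p^2}$ is extracted exactly as in the proof of \eqref{nonH}, by requiring the lower bound $\frac{n}{2}(\frac{1}{2}-\frac{1}{p})+(\gamma-s)/2$ on $1/q$ (coming from $1/r<1/p$ via scaling) to lie strictly below the upper bound $\frac{2-\gamma-s}{2(2p-1)}$ on $1/q$.

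The genuinely new ingredient in the $s<0$ setting is the upper bound $\gamma<s-\frac{n}{2}+\frac{n}{p}+\frac{2}{2p-1}$. Under the present linking, $\tilde q>2$ forces $1/q>1/(2(2p-1))$, while $\tilde q'\geq 1$ forces $1/q\leq 1/(2p-1)$, so $1/q$ is trapped in the interval $(1/(2(2p-1)),\,1/(2p-1)]$. Demanding that the lower bound $\frac{n}{2}(\frac{1}{2}-\frac{1}{p})+(\gamma-s)/2$ on $1/q$ lie strictly below $1/(2p-1)$ and rearranging yields precisely this bound. The conversion of the resulting window for $(1/q,1/r,\gamma)$ into the hypotheses on $\alpha$ and $b$ proceeds as in the $s>0$ argument: substituting $s=\frac{n}{2}-\frac{2-2b+\alpha}{2(p-1)}$ and $b=p\gamma$ and eliminating exponents produces $n-2-2s<\alpha<n$ and $0<b\leq(\alpha-n)/2+s+1$; the sharpened lower bound on $\alpha$ (from $n-2$ in the $s>0$ case to $n-2-2s$ here) is exactly the trace of the new constraint $\tilde q>2$ combined with $1/r<1/p$.

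The main obstacle will be the combinatorial cleanup of the constraint system with $s<0$: several redundancies exploited in the $s>0$ argument rely on sign conditions such as $2-n<2s$ or $s>0$ that now reverse orientation, so each step eliminating a redundant bound must be rechecked. Verifying in particular that the new upper bound on $\gamma$ is genuinely active, and not subsumed by $\frac{(p-1)s+1}{p}-\frac{(p-2)(2p-1)n}{4p^2}$ or by $n/2$, requires some case analysis in $p$ and $n$. No analytic ingredient beyond Lemma \ref{HLS} and H\"older's inequality is needed, and the absence of a companion estimate involving $|\nabla|^{-s}$ (to be absorbed downstream through the Christ--Kiselev smoothing argument mentioned in the introduction) means that Lemma \ref{le3} is not invoked here.
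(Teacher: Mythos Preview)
Your proposal is correct and follows essentially the same route as the paper: the same linking relations via Lemma~\ref{HLS} and H\"older, the same scaling compatibility check, and the same extraction of the $\gamma$-window by making each lower bound on $1/q$ strictly smaller than each upper bound. In particular, your identification of the new upper bound $\gamma<s-\tfrac{n}{2}+\tfrac{n}{p}+\tfrac{2}{2p-1}$ as arising from $\tfrac{n}{2}(\tfrac12-\tfrac1p)+\tfrac{\gamma-s}{2}<\tfrac{1}{2p-1}$ matches the paper exactly, and you are right that Lemma~\ref{le3} is not used here. Two small imprecisions worth cleaning up: the bound $\gamma<n/2$ comes from $\tfrac{\gamma}{n}<\tfrac{1}{\tilde r}\le\tfrac12$ (not $<1$), and the sharpened lower bound $n-2-2s<\alpha$ in the paper is obtained not from $\tilde q>2$ but from requiring the upper endpoint $\tfrac{\alpha-n}{2}+s+1$ of the $b$-interval (equivalently $p\ge 2$) to be positive.
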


\begin{proof}
It is sufficient to show that there exist $(q,r)\in A_{\gamma,s}$ and $(\tilde q , \tilde r) \in A_{\tilde \gamma,-s}$ with $\tilde q>2$, for which
\begin{align}\label{non1}
\nonumber
\||x|^{-b}|u|^{p-2}v&(I_\alpha\ast|\cdot|^{-b}|u|^{p-1}|w|)\|_{L_t^{\tilde{q}'}(I ; L_{x}^{\tilde{r}'}(|x|^{\tilde{r}' \tilde{\gamma}}))} \\ 
&\leq C\|u\|^{2p-3}_{L_t^{q}(I; L_{x}^{r}(|x|^{-r\gamma}))} \|v\|_{L_t^{q}(I; L_{x}^{r}(|x|^{-r\gamma}))} \|w\|_{L_t^{q}(I; L_{x}^{r}(|x|^{-r\gamma}))}
\end{align}
holds for $\alpha, b , p , \gamma, \tilde \gamma$ given as in the lemma.

	Let $-1/2<s<0.$ For $\gamma, \tilde \gamma>0$, we first consider $(\gamma,s)$-Schr\"odinger admissible pair $(q,r)$ and $ (\tilde\gamma,-s)$-Schr\"odinger admissible pair $(\tilde q,\tilde r)$ as
	\begin{equation}\label{c1b}
		0\leq \frac{1}{q} \leq \frac{1}{2}, \quad \frac{\gamma}{n}< \frac{1}{r}\leq \frac{1}{2},\quad
		\frac{2}{q}<n(\frac{1}{2}-\frac{1}{r})+2\gamma,\quad \frac{2}{q}=n(\frac{1}{2}-\frac{1}{r})+\gamma-s,
	\end{equation}
	\begin{equation}\label{c2b}
		0\leq \frac{1}{\tilde{q}} <\frac{1}{2}, \quad \frac{\tilde{\gamma}}{n}< \frac{1}{\tilde{r}}\leq \frac{1}{2}, \quad
		\frac{2}{\tilde{q}}<n(\frac{1}{2}-\frac{1}{\tilde{r}})+2\tilde{\gamma}, \quad \frac{2}{\tilde{q}}=n(\frac{1}{2}-\frac{1}{\tilde{r}})+\tilde{\gamma}+s.
	\end{equation}
	To control the left-hand side of \eqref{non1}, we utilize Lemma \ref{HLS}, and then use H\"older's inequality. Hence we have
	\begin{align*}
		\big\||x|^{-b+\tilde \gamma }|u|^{p-2}v&(I_\alpha\ast|\cdot|^{-b}|u|^{p-1}|w|)\big\|_{L_t^{\tilde{q}'}(I ; L_{x}^{\tilde{r}'})} \\
		&\leq C \||x|^{-(p-1)\gamma}|u|^{p-2}v\|_{L_t^\frac{q}{p-1}(I;L_x^\frac{r}{p-1})}\||x|^{-p\gamma}|u|^{p-1}|w|\|_{L_t^\frac qp(I;L_x^\frac rp)}\\
		&\leq C\||x|^{-\gamma}u\|^{2p-3}_{L_t^q(I;L_x^r)}\||x|^{-\gamma}v\|_{L_t^q(I;L_x^r)}\||x|^{-\gamma}w\|_{L_t^q(I;L_x^r)}
	\end{align*}
	with
	\begin{equation}\label{c3b}
		\frac{1}{\tilde q'}=\frac{2p-1}{q},\quad \frac{1}{\tilde r'}=\frac{2p-1}{r}-\frac{\alpha}{n},  \quad \tilde \gamma = \gamma, 
	\end{equation}
	\begin{equation}\label{c4b}
		0<\frac{1}{r}<\frac{1}{p}, \quad b=p\gamma.
	\end{equation}
	
It remains to check the assumptions under which \eqref{non1} holds.
Combining the last two conditions of \eqref{c2b} implies $\tilde{\gamma}>s$
which can be replaced by $\tilde{\gamma}>0$ since $-1/2<s<0$.
Substituting \eqref{c3b} into \eqref{c2b} with $\tilde\gamma>0$ implies
	\begin{equation}\label{c5b}
		\frac1{2(2p-1)}<\frac{1}{q}\leq\frac1{2p-1}, \quad \frac{n+2\alpha}{2n(2p-1)} \leq \frac{1}{r} <\frac{n+\alpha-\gamma}{n(2p-1)}, \quad \gamma>0,
	\end{equation}
	\begin{equation}\label{c6b}
		\frac{2}{q}=\frac{n+4}{2(2p-1)}-\frac{n}{r}+\frac{\alpha-\gamma-s}{2p-1}.
	\end{equation}
Here, the last one in \eqref{c5b} is trivially satisfied.	
Note that \eqref{c6b} is exactly same as the last condition of \eqref{c1b} when $p=1+\frac{2-2b+\alpha}{n-2s}$ with $b=p\gamma$, by which the second one of \eqref{c5b} becomes 
\begin{equation}\label{c7b}
\frac{4-n-2s}{4(2p-1)}< \frac{1}{q} \leq \frac{2-\gamma-s}{2(2p-1)}.
\end{equation}
From the first condition in \eqref{c5b} and \eqref{c7b}, we get 
\begin{equation}\label{c8b}
\max\Big\{\frac1{2(2p-1)},\frac{4-n-2s}{4(2p-1)}\Big\}<\frac{1}{q}\leq \min\Big\{\frac1{2p-1}, \frac{2-\gamma-s}{2(2p-1)}\Big\}.
\end{equation}

On the other hand, substituting the last condition of \eqref{c1b} into the second and third ones of \eqref{c1b} and the first one of \eqref{c4b}, the first three conditions of \eqref{c1b} and the first one of \eqref{c4b} are rewritten as 
\begin{equation}\label{c9b}
0\leq\frac1q\leq\frac12, \quad \frac{n}2\Big(\frac{1}{2}-\frac{1}{p}\Big)+\frac{\gamma-s}2<\frac1q<\frac{n-2s}{4}, \quad -s<\gamma
\end{equation}
in which the second upper bound of $1/q$ is redundant by the first upper one using the fact that $2s<n-2$.
Combining \eqref{c8b} and the first two conditions in \eqref{c9b} with $-s<\gamma$, we then get 
\begin{equation}\label{c10b}
\max\Big\{\frac{n}{2}\Big(\frac{1}{2}-\frac{1}{p}\Big)+\frac{\gamma-s}{2}, \frac{1}{2(2p-1)}, \frac{4-n-2s}{4(2p-1)} \Big\}< \frac{1}{q}\leq \min\Big\{\frac1{2p-1},\frac{2-\gamma-s}{2(2p-1)}\Big\}.
	\end{equation}
	
To derive the assumption \eqref{as-4}, we make the lower bounds of $1/q$ less than the upper ones of $1/q$ in \eqref{c10b}.
As a result, 
\begin{equation}\label{c11b}
\gamma <s-\frac{n}2+\frac{n}{p}+\frac2{2p-1}, \quad \gamma <\frac{(p-1)s+1}{p}-\frac{(p-2)(2p-1)n}{4p^2},
\end{equation}
\begin{equation}\label{c11b1}
	\gamma<1-s,\quad s>-\frac{n}{2}, \quad \gamma<\frac{n}{2}.
\end{equation}
Indeed, starting from the lower bound $\frac{n}{2}(\frac{1}{2}-\frac{1}{p})+\frac{\gamma-s}{2}$ of $1/q$, we see \eqref{c11b}. 
From the lower bound $\frac{1}{2(2p-1)}$ of $1/q$, we see the first condition of \eqref{c11b1}, and also see the last two conditions of \eqref{c11b1} from the last lower bound of $1/q$ which is $\frac{4-n-2s}{4(2p-1)}$. 
But here the first upper bound of $\gamma$ in \eqref{c11b} is less than the first upper one in \eqref{c11b1} from the fact that $-1/2<s<0$. 
By combining \eqref{c11b}, \eqref{c11b1} and $-s<\gamma$, we finally arrive at 
\begin{equation}\label{c11b11}
	-s<\gamma <\min\Big\{\frac{(p-1)s+1}{p}-\frac{(p-2)(2p-1)n}{4p^2}, s-\frac{n}2+\frac{n}{p}+\frac2{2p-1}, \frac{n}2\Big\}
\end{equation} 
as desired. 

Now we derive the assumptions in \eqref{as-111}.
Inserting $s=\frac{n}{2}-\frac{2-2b+\alpha}{2(p-1)}$ and $\gamma=p/b$ into \eqref{c11b11}, we see
\begin{equation}\label{c11b12}
	-\frac{p(p-1)n}{2(2p-1)}+\frac{p(\alpha+2)}{2(2p-1)}<b,\quad \alpha<\frac{(3p-2)n}{2p},
\end{equation}
\begin{equation}\label{c11b13}
\frac{\alpha p}{2}-(p-1)n+\frac{p}{2p-1}<b,\quad b<\frac{pn}{2}.
\end{equation}
Here, the last condition of \eqref{c11b12} is redundant since $0<\alpha<n$.
On the other hand, inserting $s=\frac{n}{2}-\frac{2-2b+\alpha}{2(p-1)}$ into $-\frac12<s<0$ implies 
\begin{equation}\label{c11b14}
\frac{\alpha+2-(p-1)(n+1)}2<b<\frac{\alpha+2-(p-1)n}{2}.
\end{equation}
By making the lower bounds of $b$ less than the upper ones of $b$ in \eqref{c11b12}, \eqref{c11b13} and \eqref{c11b14} together with $b>0$, we get 
\begin{equation}\label{c12b}
	\alpha<(3p-2)n-2, \quad (p-1)n-2<\alpha, \quad \alpha<(2p-1)n-p-3
\end{equation}
\begin{equation}\label{c12b1}
	\quad \alpha<\frac{(3p-2)n}{p}-\frac{2}{2p-1}, \quad \alpha<n+\frac1{2p-1}.
\end{equation}
Indeed, from the lower bound of $b$ of \eqref{c11b12} we obtain the first two conditions of \eqref{c12b}, and the second condition is same as that obtained from the lower one of $b$, which is $0$.
From the lower bound of $b$ in \eqref{c11b13}, we see the conditions of \eqref{c12b1}, while the last condition of \eqref{c12b} is obtained from the lower bound of $b$ of \eqref{c11b13}. 
But, all conditions for $\alpha$ in \eqref{c11b12}, \eqref{c12b} and \eqref{c12b1}, except for the second one in \eqref{c12b}, are redundant due to the facts that $n\ge2$, $0<\alpha<n$ and $p>2$.

To derive the first assumption in \eqref{as-111}, we write the second condition of \eqref{c12b} with respect $p$ as 
\begin{equation*}
	p<\frac{\alpha+2}{n}+1.
\end{equation*}
Here, eliminating $p$ in this condition with $p\geq2$ and combining $0<\alpha<n$, we see $n-2<\alpha<n$ as desired.
Finally, we derive the second assumption in \eqref{as-111} which is left.
Since $(p-1)n-2>0$, combining $p\ge2$ after eliminating $\alpha$ in the second condition of \eqref{c12b} with $0<\alpha<n$, we see
\begin{equation*}
	2\leq p<2+\frac{2}{n}.
\end{equation*}
Substituting $p=1+\frac{2-2b+\alpha}{n-2s}$ into this implies
	\begin{equation}\label{c13b}
		\frac{\alpha-n}{2}+\frac{(n+2)s}{n} < b \leq \frac{\alpha-n}{2}+s+1.
	\end{equation}
	Since the lower bound of $b$ in \eqref{c13b} is less than zero from the fact that $\alpha<n$, the lower one of $b$ is eliminated.
	Making the upper bound of $b$ in \eqref{c13b} greater than zero, we get the third assumption in \eqref{as-111} and $n-2s-2<\alpha$.
	Since $0<n-2s-2<n$, we also get the second assumption in \eqref{as-111}.
\end{proof}

\subsection{Contraction mapping}\label{sec5}
Now we prove the well-posedness results by applying the contraction mapping principle combined with the weighted Strichartz estimates. 
The nonlinear estimates just obtained above play a key role in this step.  
The proof is rather standard once one has the nonlinear estimates, and thus we provide a proof for the mass-critical case only. The other critical cases are proved in the same way
just with a slight modification.

By Duhamel's principle, we first write the solution of the Cauchy problem \eqref{HE} as
\begin{equation}\label{2Duhamel1}
\Phi(u)=\Phi_{u_0}(u) = e^{it \Delta} u_0 - i\lambda  \int_{0}^{t} e^{i(t-\tau)\Delta}F(u)\, d\tau
\end{equation}
where $F(u)= |\cdot|^{-b}|u(\cdot,\tau)|^{p-2}u(\cdot,\tau)(I_\alpha\ast|\cdot|^{-b}|u(\cdot,\tau)|^p)$.
For appropriate values of  $T,N, M>0$ determined later, we shall show that
$\Phi$ defines a contraction map on
\begin{align*}
X(T,N, M)=\Big\lbrace u\in  C_t(I;L^2)\cap &L_t^q(I;L_x^r(|x|^{-r\gamma})):\\
&\sup_{t\in I}\|u\|_{L^2}\leq M,\ \|u\|_{S_{\gamma,0}(I)} \leq N \Big\rbrace
\end{align*}
equipped with the distance
$$d(u,v)=\sup_{t\in I} \|u-v\|_{L^2} +\|u-v\|_{S_{\gamma,0}(I)}$$
where $I=[0,T]$ and the exponents $q, r,\gamma$ are given as in Theorem \ref{Hloc}.

To control the Duhamel term in \eqref{2Duhamel1},
we derive the following inhomogeneous estimates from Theorem \ref{p1}:
\begin{equation}\label{2in}
\left\| \int_{0}^t e^{i(t-\tau)\Delta} F(\tau) d\tau \right\|_{L_t^q L_x^r(|x|^{-r\gamma})} \lesssim \|F\|_{L_t^{\tilde{q}'} L_x^{\tilde{r}'}(|x|^{\tilde{r}'\tilde{\gamma}})}
\end{equation}
where $(q, r)$ is $(\gamma, 0)$-Schr\"odinger admissible
and $(\tilde{q}, \tilde{r})$ is $(\tilde{\gamma}, 0)$-Schr\"odinger admissible, with $q>\tilde{q}'$ (and hence $\tilde q>2$).
Indeed, by duality and \eqref{1}, one can see that
\begin{equation}\label{2du}
\left\| \int_{-\infty}^{\infty} e^{-i\tau\Delta} F(\tau) d\tau \right\|_{L^2} \lesssim \|F\|_{L_t^{\tilde{q}'} L_x^{\tilde{r}'}(|x|^{\tilde{r}'\tilde{\gamma}})}
\end{equation}
for any $(\tilde{\gamma}, 0)$-Schr\"odinger admissible pair $(\tilde{q}, \tilde{r})$.
Combining \eqref{1} and \eqref{2du}, and then applying the Christ-Kiselev lemma \cite{CK}, the desired estimate \eqref{2in} follows.

We now show that $\Phi$ is well-defined on $X$.
By applying Plancherel's theorem, \eqref{2du} and the nonlinear estimate \eqref{non} with
\begin{equation}\label{2se1}
\frac{1}{\tilde{q}'}=\frac{2p-1}{q}, \quad \frac{1}{\tilde{r}'}=\frac{2p-1}{r}-\frac{\alpha}{n}, \quad \tilde{\gamma}=\gamma,
\end{equation}
we have
\begin{align}\label{2fg11}
\sup_{t\in I}\|\Phi(u)\|_{L^2}
&\leq C \| u_0 \|_{L^2} +C\sup_{t\in I} \left\|  \int_{-\infty}^{\infty}  e^{-i\tau\Delta} \chi_{[0,t]}(\tau) F(u)\, d\tau \right\|_{L^2}\nonumber\\
&\leq C \| u_0 \|_{L^2}+C\|F(u)\|_{S'_{\tilde \gamma,0}(I)}\nonumber\\
&\leq C \| u_0 \|_{L^2} +C\|  u \|^{2p-1}_{S_{\gamma,0}(I)}.
\end{align}
On the other hand, by using \eqref{2in} and \eqref{non} under the relation \eqref{2se1}, we see
\begin{align}\label{2fg33}
\nonumber
\|\Phi(u)\|_{S_{\gamma,0}(I)}&\leq \| e^{it \Delta} u_0\|_{S_{\gamma,0}(I)} + C\| F(u) \|_{S'_{\tilde \gamma,0}(I)}\nonumber\\
&\leq \| e^{it \Delta} u_0\|_{S_{\gamma,0}(I)} + C\| u \|^{2p-1}_{S_{\gamma,0}(I)}.
\end{align}
By the dominated convergence theorem, we take here $T>0$ small enough so that
\begin{equation}\label{2as1}
\|e^{it\Delta} u_0\|_{S_{\gamma,0}(I)} \leq \varepsilon
\end{equation}
for some $\varepsilon>0$ chosen later.
From \eqref{2fg11} and \eqref{2fg33}, it follows that
\begin{equation*}
\sup_{t\in I}\|\Phi(u)\|_{L^2}\leq C\|u_0 \|_{L^2}+ CN^{2p-1} \quad \text{and}\quad \|\Phi(u)\|_{S_{\gamma,0}(I)}\leq \varepsilon+ CN^{2p-1}
\end{equation*}
for $u \in X$.
Therefore, $\Phi(u)\in X$ if
\begin{equation}\label{2az1}
C\|u_0 \|_{L^2}+ CN^{2p-1}\leq M \quad \text{and}\quad \varepsilon+ CN^{2p-1} \leq N.
\end{equation}

Next we show that $\Phi$ is a contraction on $X$.
Using the same argument employed to show \eqref{2fg11} and \eqref{2fg33}, one can see that
\begin{align*}
d(\Phi(u), \Phi(v))&=\sup_{t\in I}\|\Phi(u)-\Phi(v)\|_{L^2} +\|\Phi(u)-\Phi(v)\|_{S_{\gamma,0}(I)} \nonumber\\
&\leq 2C\|F(u)-F(v)\|_{S'_{\tilde \gamma,0}(I)}.
\end{align*}
By making use of the nonlinear estimates \eqref{non} here after using the following simple inequality
\begin{align*}
|F(u)-F(v)| &= \Big||x|^{-b}|u|^{p-2}u(I_\alpha\ast|x|^{-b}|u|^p)-|x|^{-b}|v|^{p-2}v(I_\alpha\ast|x|^{-b}|v|^p)|\Big|\\
&=\Big|x|^{-b}(|u|^{p-2}u-|v|^{p-2}v)(I_\alpha\ast|x|^{-b}|u|^p)\\
&\qquad\qquad\qquad\quad\quad + |x|^{-b}|v|^{p-2}v\big(I_\alpha \ast|x|^{-b}(|u|^p-|v|^p)\big)\Big|\\
&\leq C \Big||x|^{-b}(|u|^{p-2}+|v|^{p-2})|u-v|(I_\alpha\ast|x|^{-b}|u|^p)\Big| \\
&\qquad\qquad\qquad\quad\quad +C\Big||x|^{-b}|v|^{p-1}\big(I_\alpha\ast|x|^{-b}(|u|^{p-1}+|v|^{p-1})|u-v|\big)\Big|,
\end{align*}
it follows that
\begin{align*}
d(\Phi(u), \Phi(v))
&\leq 2C (\| u\|_{S_{\gamma,0}(I)}^{2p-2} +\| v\|_{S_{\gamma,0}(I)}^{2p-2})\|u-v\|_{S_{\gamma,0}(I)} \nonumber\\
&\leq 4CN^{2p-2} d(u,v)
\end{align*}
for $u, v \in X$.
Now by setting $M=2C\|u_0\|_{L^2}$ and $N=2\varepsilon$ for $\varepsilon>0$ small enough so that
\eqref{2az1} holds and $4CN^{2p-2} \leq1/2$,
it follows that $X$ is stable by $\Phi$ and $\Phi$ is a contraction on $X$.
Therefore, there exists a unique local solution $u \in C(I ; L^2) \cap L_t^{q}(I ; L_x^{r}(|x|^{-r \gamma}))$ for any $(q,r)\in A_{\gamma,0}$.

The continuous dependence of the solution $u$ with respect to initial data $u_0$ follows obviously in the same way;
if $u,v$ are the corresponding solutions for initial data $u_0,v_0$, respectively, then
\begin{align*}
d(u,v)&\leq d\big(e^{it \Delta}u_0, e^{it\Delta} v_0\big) +d\bigg(\int_{0}^{t} e^{i(t-\tau)\Delta}F(u)d\tau, \int_{0}^{t} e^{i(t-\tau)\Delta}F(v) d\tau\bigg)\\
&\leq C\|u_0-v_0\|_{L^2}+C\|F(u)-F(v)\|_{S'_{\tilde \gamma,0}(I)}\\
&\leq C\|u_0-v_0\|_{L^2}+\frac12\|u-v\|_{S_{\gamma,0}(I)}
\end{align*}
which implies $d(u,v) \lesssim \|u_0-v_0\|_{L^2}$.

Thanks to Theorem \ref{p1}, the smallness condition \eqref{2as1} can be replaced by that of $\|u_0\|_{L^2}$ as
$$\|e^{it\Delta}u_0\|_{S_{\gamma,0}(I)} \leq C\|u_0\|_{L^2} \leq \varepsilon$$
from which we can choose $T=\infty$ in the above argument to get the global unique solution.
It only remains to prove the scattering property.
Following the argument above, one can easily see that
\begin{align*}
\Big\|e^{-it_2\Delta}u(t_2)-e^{-it_1\Delta}u(t_1)\Big\|_{L^2}&=\bigg\|\int_{t_1}^{t_2}e^{-i\tau\Delta}F(u)d\tau\bigg\|_{L^2}\\
&\lesssim\|F(u)\|_{S'_{\tilde \gamma,0}(I)}\\
&\lesssim\| u \|_{S_{\gamma,0}(I)}^{2p-1} \quad\rightarrow\quad0
\end{align*}
as $t_1,t_2\rightarrow\infty$. This implies that
$\varphi:=\lim_{t\rightarrow\infty}e^{-it\Delta}u(t)$ exists in $L^2$.
Moreover, 
$$u(t)-e^{it\Delta}\varphi= i\lambda  \int_{t}^{\infty} e^{i(t-\tau)\Delta}F(u) d\tau,$$
and hence
\begin{align*}
\big\|u(t)-e^{it\Delta}\varphi\big\|_{L^2}&\lesssim \bigg\|\int_{t}^{\infty} e^{i(t-\tau)\Delta}F(u) d\tau\bigg\|_{L^2}\\
&\lesssim\|F(u)\|_{S'_{\tilde\gamma,0}([T,\infty))}\\
&\lesssim\| u \|_{S_{\gamma,0}([T,\infty))}^{2p-1} \quad\rightarrow\quad0
\end{align*}
as $t\rightarrow\infty$.
This completes the proof.

\end{document}